\documentclass[12pt]{amsart}
\usepackage{amsmath}
\usepackage{palatino}
\usepackage{amsfonts}
\usepackage{amsthm}
\usepackage{amssymb}
\usepackage{amscd}
\usepackage[all]{xy}
\usepackage{enumerate}
\usepackage{graphicx}
\usepackage{xcolor}

\textheight22truecm
\textwidth17truecm
\oddsidemargin-0.5truecm
\evensidemargin-0.5truecm

\keywords{Gonality sequence, gonal scroll, extremal curve, Hirzebruch surface} 

\subjclass[2010]{Primary 14H45; Secondary 14H51, 14J26}

\pagestyle{myheadings}

\theoremstyle{plain}

\newtheorem{thm}{Theorem}[section]

\newtheorem{thmx}{Theorem}

\newtheorem{prop}[thm]{Proposition}

\newtheorem{cor}[thm]{Corollary}
\newtheorem{corx}{Corollary}

\newtheorem{lem}[thm]{Lemma}

\theoremstyle{definition}
\newtheorem{defn}[thm]{Definition}

\newtheorem{rmk}[thm]{Remark}

\newcommand{\sO}{\mathcal{O}}

\newcommand{\mF}{\mathbb{F}}

\newcommand{\mN}{\mathbb{N}}
\newcommand{\mP}{\mathbb{P}}

\newcommand{\mZ}{\mathbb{Z}}

\newcommand{\Pic}{\mathrm{Pic}\,}

\numberwithin{equation}{section}


\title{On the slope inequalities for extremal curves }

\author{Valentina Beorchia and Michela Brundu}

\address{Dipartimento di Matematica e Geoscienze,
  Universit\`a di Trieste\\ 
via Valerio 12/b, 34127 Trieste, Italy.\\
\texttt{beorchia@units.it, brundu@units.it}}

\begin{document}

\begin{abstract} 
The present paper concerns the question of the violation of the $r$-th inequality for extremal curves in $\mP^r$, posed in [KM]. We show that the answer is negative in many cases (Theorem \ref{mainthm} and Corollary \ref{fasciabianca}). The result is obtained by a detailed analysis of the geometry of extremal curves and their canonical model. As a consequence, we show that particular curves on a Hirzebruch surface do not violate the slope inequalities in a certain range (Theorem \ref{verylast}).
\end{abstract}

\maketitle

\markboth{}{On the slope inequalities for extremal curves}

\tableofcontents


\section{Introduction}

Let $X$ be a smooth and connected projective curve of genus $g \ge 3$ defined over an algebraically closed field with characteristic zero. For each integer $r \ge 1$, H. Lange and P. E. Newstead in \cite[Section 4]{LN} introduced the notion of {\it $r$-gonality} $d_r(X)$ of $X$, which is the minimal integer $d$ such that there exists a linear series $g^r_d$ (hence there is a rational map $X \dasharrow \mP^r$ of degree $d$). In particular, for $r=1$ we obtain the classical {\it gonality} $\gamma(X)$ of the curve $X$. The sequence $\{ d_r(X)\}_{r \ge 1}$ is called the {\it gonality sequence of $X$}. 

For any curve and any $r \ge g$, the numbers $d_r$ are known by the Riemann-Roch Theorem. Hence there are only finitely many interesting numbers in a gonality sequence and in \cite{LN} it is evident that these numbers are deeply related to each other. In particular, in many cases they satisfy the {\em $r$-th slope inequality}, that is
\begin{equation}\label{satisfy slope}
\frac{d_r(X)}{ r} \ge \frac{d_{r+1}(X) }{ r+1}
\end{equation}
and this has been widely studied also in \cite{LM}.

Observe that if $X$ does not satisfy some slope inequality, then the corresponding Brill--Noether number is negative (see Remark \ref{ossBN}). Consequently,
a Brill--Noether general curve must satisfy all slope inequalities. 
The same occurs also for very special curves like hyperelliptic, trigonal and bielliptic curves 
(see \cite[Remark 4.5]{LN}).

The gonality sequence of a curve $X$ is related to Brill--Noether theory of vector bundles on $X$ 
(see \cite{LN}). 
Moreover, if a curve $X$ satisfies the $p$-th slope inequality for any $p < n$, then semistable vector bundles of rank $n$ on X satisfy Mercat's Conjecture, which governs the dimension of their spaces of global sections (cf. \cite[Conjecture 3.1 and Corollary 4.16]{LN}).

These results motivate the discussion of slope inequalities for specific classes of curves. 

Some sporadic examples of curves violating some slope inequality can be found by in \cite{Ba1}, \cite{Ba2}, \cite{LM}, \cite{KM}, \cite{Pan} and 
various families have been detected in \cite{LM}.
 Among such examples there are {\em extremal curves}, that is curves attaining the Castelnuovo bound for the genus. In \cite[Theorem 4.13]{LM} the authors prove that extremal curves of degree $\ge 3r - 1$ in $\mP^r$ do not satisfy all slope inequalities.

Moreover, in \cite[Theorem 4.4 and Corollary 4.5]{KM} Kato and Martens prove that an extremal curve in $\mP^3$
 of degree $d \ge 10$ satisfies
\begin{equation}
\label{viol3}
\frac{d_3}{ 3 }< \frac{d_{4} }{ {4}}.
\end{equation}

In the same paper (see the final Questions), the authors propose a new investigation in this direction; more precisely, they pose the following

\medskip
\noindent {\bf Question -} 
Is it true that extremal curves of degree $d \ge 3r+1$ in $\mP^r$ satisfy
\begin{equation}
\label{viol?}
\frac{d_r}{ r }< 
\frac {d_{r+1} }{ {r+1}}
\end{equation}
for $r \ge 4$?
\medskip

In the present paper we show that the answer to the above question is, in many cases, negative. The main results are the following:

\begin{thmx}
Let $X$ be a $\gamma$-gonal extremal curve of degree $d \ge 3r - 1$ in $\mP^r$ ($r \ge 3$) where $\gamma \ge 4$. Then $d_r=d$.
If, in addition, $X$ is not isomorphic to a plane curve,
then $X$ satisfies the following conditions:
\begin{itemize}
\item[$i)$]
 $d_{r+1}\le d+\gamma -1$;
\item[$ii)$]
under some technical hypotheses on the congruence class of $d$, the $r$-th slope inequality holds, i.e.
$$
\frac {d_r} r \ge \frac {d_{r+1} } {r+1}.
$$ 
\end{itemize}
\end{thmx}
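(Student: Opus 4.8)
The plan is to use that an extremal curve of large degree is forced onto a surface of minimal degree, and to read off both $d_r$ and an upper bound for $d_{r+1}$ from the geometry of that surface.

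\emph{Step 1 --- putting $X$ on a scroll.} Since $d\ge 3r-1>2r+1$, the classical structure theory of extremal curves shows that $X$ lies on a surface $S\subset\mP^r$ of minimal degree $r-1$; because $X$ is assumed not isomorphic to a plane curve, the Veronese surface in $\mP^5$ (and, more generally, any curve abstractly isomorphic to a plane curve) is ruled out, so $S$ is a rational normal scroll, possibly a cone. I would pass to the Hirzebruch surface $\nu\colon\mF_e\to S$ resolving the possible vertex, write $H=C_0+af$ with $a\ge b\ge 0$, $a+b=r-1$, $e=a-b$, and write the class of the strict transform of $X$ as $X\sim\gamma C_0+\beta f$ in $\Pic\mF_e=\mZ C_0\oplus\mZ f$. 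The ruling cuts on $X$ a pencil of degree $X\cdot f=\gamma$; comparing adjunction $2g-2=X\cdot(X+K_{\mF_e})$ with Castelnuovo's equality $g=\pi(d,r)$ identifies it with a gonality pencil, so $\gamma=\gamma(X)=\lceil(d-1)/(r-1)\rceil$ (in particular, for extremal curves the hypotheses $\gamma\ge4$ and $d\ge 3r-1$ are equivalent), and $\beta$ is determined by $d=X\cdot H=\gamma b+\beta$.

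\emph{Step 2 --- the equality $d_r=d$.} The inequality $d_r\le d$ is the given embedding. For the converse, suppose $X$ carries a $g^r_{d'}$ with $d'<d$ and let $\phi$ be the associated map. If $\phi$ maps $X$ birationally onto its image, the image is a nondegenerate curve of degree $d'\ge r$, so $g(X)\le\pi(d',r)<\pi(d,r)=g(X)$ because $\pi(\cdot,r)$ is strictly increasing on $\{r,r+1,\dots\}$; a contradiction. If $\phi$ is not birational onto its image, it factors as a degree-$k$ cover $X\to X_0$ ($k\ge2$) followed by an embedding of a nondegenerate curve $X_0\subset\mP^r$ of degree $d_0=d'/k$, so $d_0\le(d-1)/k$ and $g(X_0)\le\pi(d_0,r)$. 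If the gonality pencil of $X$ does not factor through $X\to X_0$, the Castelnuovo--Severi inequality applied to the maps $X\to\mP^1$ (degree $\gamma$) and $X\to X_0$ (degree $k$) gives $\pi(d,r)\le k\,\pi(d_0,r)+(\gamma-1)(k-1)$, which a short estimate shows to be incompatible with $\gamma\ge4$ (equivalently $m:=\lfloor(d-1)/(r-1)\rfloor\ge3$); and if it does factor through, then $X_0$ cannot be rational (else $X$ would carry a $g^1$ of degree $\le k<\gamma$) and $\gamma(X)=k\,\gamma(X_0)$, and Riemann--Hurwitz together with the uniqueness of the gonality pencil of an extremal curve again gives a contradiction. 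Hence $d_r=d$.

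\emph{Step 3 --- part (i).} I would restrict to $X$ the linear system that enlarges the scroll by one ruling. On $S$ (equivalently on $\mF_e$) the divisor $H+f$ has $h^0(S,H+f)=r+3$: indeed $|H+f|$ maps $\mF_e$ to a scroll of degree $(H+f)^2=r+1$ in $\mP^{r+2}$, resolving the vertex in the cone case. Since $\gamma\ge4>1$, the class $(H+f)-X=(1-\gamma)C_0+(a+1-\beta)f$ meets a fibre negatively, so $h^0\big(S,(H+f)-X\big)=0$, and the restriction sequence
\[
0\longrightarrow\sO_S\big((H+f)-X\big)\longrightarrow\sO_S(H+f)\longrightarrow\sO_X\big((H+f)|_X\big)\longrightarrow 0
\]
gives $h^0\big(X,(H+f)|_X\big)\ge r+3$. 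Thus $X$ carries a $g^{r+2}_{d+\gamma}$ (its degree is $X\cdot(H+f)=d+\gamma$), and subtracting a base point produces a $g^{r+1}_{d+\gamma-1}$; hence $d_{r+1}\le d+\gamma-1$.

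\emph{Step 4 --- part (ii), and where the work lies.} Since $d_r=d$, the inequality $d_r/r\ge d_{r+1}/(r+1)$ is equivalent to $d_{r+1}\le d+d/r$, so by Step 3 it suffices that $\gamma-1\le d/r$, i.e. $d\ge r(\gamma-1)$. Writing $d-1=m(r-1)+\epsilon$ with $0\le\epsilon\le r-2$, one has $\gamma=m$ when $\epsilon=0$ and $\gamma=m+1$ otherwise, so $d\ge r(\gamma-1)$ becomes an explicit upper bound for $d$ in terms of $r$ and $\epsilon$ (of order $r^2$): this is precisely the announced technical hypothesis on the congruence class of $d$, and under it the $r$-th slope inequality holds (outside that range the bound of Step 3 no longer suffices, in accordance with the $\mP^3$ computations of Kato--Martens). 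Given Step 1, parts (i) and (ii) are short --- (ii) is arithmetic and (i) a single cohomology computation --- so the main obstacle is twofold: carrying out Step 1 with correct bookkeeping of the class $\gamma C_0+\beta f$, especially in the cone case where $X$ may pass through the vertex and one must work on $\mF_e$ throughout; and, above all, the non-birational case of Step 2, where the hypothesis $\gamma\ge4$ and a Castelnuovo--Severi analysis are genuinely needed to rule out low-degree multiple covers. I expect the latter to absorb most of the effort.
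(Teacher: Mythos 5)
Your Steps 3 and 4 recover the paper's parts $(i)$ and $(ii)$ correctly, though part $(i)$ by a different device: you bound $h^0\bigl(X,(H+f)|_X\bigr)$ from below via the restriction sequence $0\to\sO_S(H+f-X)\to\sO_S(H+f)\to\sO_X((H+f)|_X)\to0$ on the (desingularized) scroll and then drop a point, whereas the paper works on the canonical model $X_K\subset\mP^{g-1}$ and applies the Geometric Riemann--Roch Theorem to $D=H_X+\Gamma_P-P$, using that $\langle\Gamma_P\rangle$ and $\langle H_X\rangle$ share the point $P$. Both yield $d_{r+1}\le d+\gamma-1$, and your reduction of $(ii)$ to $d\ge r(\gamma-1)$ is exactly the paper's inequality; the two approaches to $(i)$ are of comparable length, yours being perhaps more mechanical and the paper's more synthetic.

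The genuine divergence, and the gap, is Step 2. The paper obtains $d_r=d$ in two lines: it cites Lange--Martens (\cite[Theorem 4.13]{LM}, Theorem \ref{LM1} here) for $d_{r-1}=d-1$, and then $d_r>d_{r-1}$ by strict monotonicity of the gonality sequence forces $d_r\ge d$. You instead try to reprove $d_r\ge d$ ab initio, and the compound-cover branch is not closed. In the sub-case where the gonal pencil \emph{does} factor through $X\to X_0$, Riemann--Hurwitz only gives $g_0\le(g-1)/k+1$, i.e.\ an \emph{upper} bound on $g(X_0)$, which for general $\gamma$ does not visibly conflict with $X_0$ carrying a $g^r_{d_0}$ with $kd_0<d$; your appeal to ``uniqueness of the gonality pencil'' is asserted, not derived, and your parenthetical ``else $X$ would carry a $g^1$ of degree $\le k<\gamma$'' is backwards (if $X_0\cong\mP^1$ one gets $k\ge\gamma$, and the contradiction comes from $kd_0\ge\gamma r>d$, not from $k<\gamma$). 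This branch needs a genuine argument or, better, the citation. Two further slips: the claimed equivalence ``$\gamma\ge4\iff d\ge3r-1$'' is false --- fourgonal extremal curves of degree $3r-2$ exist and occupy Section 5 of the paper --- because $\epsilon=0$ does \emph{not} force $\gamma=m$: by Corollary \ref{propc} both $\gamma=m$ and $\gamma=m+1$ occur when $\epsilon=0$. For the same reason the hypothesis in $(ii)$ is not purely a congruence condition on $d$: the paper's case $(a)$ requires $\epsilon=0$ \emph{and} $m=\gamma$ \emph{and} $r\ge\gamma-1$, while case $(b)$ is $\epsilon\ge\gamma-2$; your derivation of $d\ge r(\gamma-1)$ is correct, but the translation into conditions on $(r,\epsilon)$ alone is not.
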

 For more details on the assumptions, see Theorem \ref{mainthm}.
As a consequence, the following result holds (see Corollary \ref{fasciabianca}):
\begin{corx}
Let $X$ be a $\gamma$-gonal extremal curve in $\mP^r$, where $\gamma \ge 4$.
If the degree $d$ satisfies
\begin{equation}
r(\gamma-1) \le d \le \gamma(r-1)+1 
\end{equation}
then the $r$-th slope inequality holds. 
\end{corx}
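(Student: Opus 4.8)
The plan is to deduce the $r$-th slope inequality from the Main Theorem (Theorem~\ref{mainthm}) by an elementary arithmetic manipulation, after verifying that its hypotheses hold. First note that the range $r(\gamma-1)\le d\le\gamma(r-1)+1$ is non-empty only when $\gamma\le r+1$; combined with $\gamma\ge 4$ this forces $r\ge 3$, so I may work under the assumption $r\ge 3$, there being nothing to prove otherwise. Next, the lower bound gives $d\ge r(\gamma-1)\ge 3r>3r-1$, so Theorem~\ref{mainthm} applies; in particular $d_r=d$. To invoke part~$(i)$ of that theorem I must also know that $X$ is not isomorphic to a smooth plane curve, and I would obtain this from the definition of extremality, $g(X)=\pi(d,r)$, where $\pi(d,r)$ denotes the Castelnuovo bound for the genus of a non-degenerate curve of degree $d$ in $\mP^r$: a smooth plane curve of gonality $\gamma\ge 4$ has degree $\gamma+1$ and genus $\binom{\gamma}{2}$, whereas a short computation with the explicit formula for $\pi$, together with its monotonicity in the degree, gives $\pi(d,r)\ge\pi\bigl(r(\gamma-1),r\bigr)>\binom{\gamma}{2}$ for every $d\ge r(\gamma-1)$; hence $g(X)\ne\binom{\gamma}{2}$ and $X$ is not such a curve. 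With the upper bound $d\le\gamma(r-1)+1$ supplying the last condition needed to place $X$ within the scope of Theorem~\ref{mainthm}, part~$(i)$ yields $d_{r+1}\le d+\gamma-1$.

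It then suffices to read off the chain of inequalities
\[
\frac{d_{r+1}}{r+1}\ \le\ \frac{d+\gamma-1}{r+1}\ \le\ \frac{d}{r}\ =\ \frac{d_r}{r}:
\]
the first step is $d_{r+1}\le d+\gamma-1$, the last is $d_r=d$, and the middle inequality, after clearing the positive denominators, is equivalent to $r(d+\gamma-1)\le d(r+1)$, that is to $r(\gamma-1)\le d$ --- precisely the lower bound in the hypothesis. This is the asserted $r$-th slope inequality.

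The substantive input is carried entirely by Theorem~\ref{mainthm}; beyond it, the only step calling for a genuine argument is the exclusion of the plane-curve case, for which the comparison with the Castelnuovo bound sketched above should be enough (one could alternatively appeal to the structural description of extremal curves developed in the body of the paper). I would stress the complementary roles of the two bounds on $d$: the lower bound $r(\gamma-1)\le d$ is exactly the numerical condition that upgrades the estimate $d_{r+1}\le d+\gamma-1$ to the slope inequality, while the upper bound $d\le\gamma(r-1)+1$ is what keeps $X$ in the regime to which Theorem~\ref{mainthm} applies.
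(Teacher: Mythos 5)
Your proof is correct and follows the same overall strategy as the paper: everything is reduced to Theorem~\ref{mainthm}. The differences are minor but worth noting. For the exclusion of the plane-curve case the paper argues more directly: by Theorem~\ref{acgh1}(i) and Max Noether such a curve has $r=5$, $d=2k$ and $\gamma=k-1$, so $d=2(\gamma+1)<5(\gamma-1)=r(\gamma-1)$ for $\gamma\ge 4$, contradicting the lower bound on $d$ outright; your genus comparison $\pi(d,r)>\binom{\gamma}{2}$ also works but is more roundabout and is left as a sketch. For the slope inequality itself, you combine part~$(i)$ of the Main Theorem with the observation that $\tfrac{d+\gamma-1}{r+1}\le\tfrac{d}{r}$ is equivalent to $d\ge r(\gamma-1)$, which is exactly the arithmetic underlying the paper's proof of part~$(ii)$; the paper instead routes the corollary through Corollary~\ref{propc}, splitting into the cases $m=\gamma,\ \epsilon=0$ and $m=\gamma-1,\ \epsilon\ge\gamma-2$ so as to verify hypotheses $(a)$ or $(b)$ of Theorem~\ref{mainthm}$(ii)$. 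Your route is a legitimate streamlining. One small correction: the upper bound $d\le\gamma(r-1)+1$ is not what ``keeps $X$ in the regime of Theorem~\ref{mainthm}''; in your argument its only role is to force $\gamma\le r+1$, hence $r\ge 3$ (and, via the structure theory of Corollary~\ref{propc}, it is in fact automatic for extremal curves not isomorphic to plane curves).
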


The technique involved in the proofs relies on the fact that
extremal curves are either isomorphic to a smooth plane curve or they lie on a Hirzebruch surface (see Theorem \ref{acgh1}). Then, in order to bound the requested gonality number, we consider the residual divisor to a point in a general hyperplane section. It turns out that the dimension of the linear system associated with such a residual divisor can be estimated by looking at the canonical model of the curve.

The organization of the paper is as follows.

Section 2 contains some preliminaries about extremal curves and curves on Hirzebruch surfaces. 

Section 3 summarizes results on the gonal sequence and the slope inequality which can be found in the literature. 

Section 4 investigates the geometry and gonality of extremal curves. They are particular curves on a Hirzebruch surface, but not so rare. Indeed, thanks to Theorem \ref{everex}, any smooth irreducible curve on a Hirzebruch surface, under a suitable assumption on its class, can be embedded in a projective space as an extremal curve.
Next we state and prove the main results, that is Theorem \ref{mainthm} and Corollary \ref{fasciabianca}.

In Section 5, we recall a result of \cite{LM}, where the authors show that in degree $d=3r-1$ the $r$-th slope inequality is violated by extremal curves and we prove, by an {\em ad hoc} argument, that the same holds in degree $d=3r-2$ 
if $r \ge 5$.

Finally, in Section 6, we focus on fourgonal extremal curves, and we show that certain foursecant curves on a Hirzebruch surface admit several embeddings as an extremal curve in some projective space. This allows us to exhibit specific classes of 
extremal curves whose $r$-th slope inequality holds for $r$ in a suitable interval (see Theorem \ref{verylast}).

\ackn

The authors are grateful to the anonymous Referees for the accurate reading of the first version of the present paper, and for the useful comments and suggestions.

This research is supported by funds of the
Universit\`a degli Studi di Trieste - Finanziamento di Ateneo per
progetti di ricerca scientifica - FRA 2022, Project DMG. The first author 
is supported by italian MIUR funds, PRIN project Moduli Theory and Birational Classification (2017), and
 is member of GNSAGA of INdAM.


\section{Notation and preliminary notions}

If $x$ is a positive real number, by $[x]$ we denote the {\em integer part} of $x$, i.e. the largest integer not exceeding $x$.

In this paper $\mP^{n}$ denotes the projective space over an algebraically closed field of characteristic zero.
We shall also use the following notation:

\begin{itemize}
\item {} given a projective scheme $Z\subseteq \mP^{n}$,
$\langle Z \rangle$ will denote the linear span of $Z$ in $\mP^{n}$;

\item {} by a {\em curve} $X$ we shall mean a smooth irreducible curve,
unless otherwise specified;

\item {} given a linear system $|D|$ on a curve $X$, we will denote by
$\varphi _{D}$ the morphism associated with $|D|$ and by $X_{D}=
\varphi _{D} (X)$ the image of $X$ under $\varphi _{D}$; in particular, if $X$ is a non--hyperelliptic curve of genus $g$,
$X_K \subset \mP^{g-1}$ will denote the {\em canonical model} of $X$;

\item {} we say that a linear series $g^r_d$ is {\em simple} if for any divisor $D \in g^r_d$
 the induced rational map $\varphi _{D}$ is birational onto its image.
\end{itemize}

\begin{defn}
\label{indeko}
 If $X$ and $Y$ are two curves, a morphism $\varphi: \; X \rightarrow Y$ is said {\em indecomposable} if it cannot be factorized as $\varphi = \alpha \circ \beta$, where $\alpha$ and $ \beta$ are morphisms of degree bigger than one. In particular, if $Y = \mP^1$, we say that a linear series $|D| = g^1_d$ is {\em indecomposable} if the morphism $\varphi _{D}$ associated with $|D|$ is so. 
\end{defn}

\begin{defn}
 The {\em gonality} of a curve $X$ is the minimum degree $d$ of a linear series $g^1_d$ on $X$; if $X$ has gonality $\gamma$ then each series $g^1_\gamma$ is called a {\em gonal series}. If $\Gamma \in g^1_\gamma$ denotes a general {\em gonal divisor} of $X$, then the morphism 
 $\varphi_\Gamma: X \rightarrow \mP ^1$ is called a {\em gonal cover}.
\end{defn}

 \medskip
Let $X$ be a curve in $\mP ^r$ of degree $d \ge 2r+1$ (the motivation of such a bound will be explained in the forthcoming Remark \ref{remarcoa}).
Setting 
$$
m=m_{d,r}=\left[ \frac {d-1} {r-1}
\right],
$$ 
 we can write, for a suitable integer $\epsilon=\epsilon_{d,r}$,
\begin{equation}
\label{degm}
d-1= m(r-1)+\epsilon , \quad 0\le \epsilon \le r-2.
\end{equation}
It is well--known that the genus $g(X)$ of $X$ satisfies the {\em Castelnuovo's bound} i.e.
\begin{equation}
\label{casbo}
g(X) \le \pi(d,r):=m\left( \frac{m-1} 2 (r-1)+\epsilon\right).
\end{equation}

Clearly, the values of $m$ and $\epsilon$ depend on $d$ and $r$. So we introduce the following notation.

\begin{defn}
\label{razio}
Let $X \subset \mP^r$ a curve of degree $d$. The integer $m=m_{d,r}$ in formula (\ref {degm}) will be called {\em $m$-ratio} of $X$ in $\mP^r$. Analogously, the integer $\epsilon=\epsilon_{d,r}$ 
will be called {\em $\epsilon$-remainder} of $X$ in $\mP^r$.
\end{defn}

\begin{defn}
\label{estre}
A curve $X$ is said an 
{\em extremal curve in $\mP ^r$} if it has a simple linear series $g^r_d$ of degree $d \ge 2r+1$ and $X$ has the maximal genus among all curves admitting such a linear series, i.e.
\begin{equation}
\label{maxgen}
g(X) = \pi(d,r)= m_{d,r}\left( \frac{m_{d,r}-1} 2 (r-1)+\epsilon_{d,r} \right).
\end{equation}
\end{defn}

Observe that the notion above is {\em relative} to the space $\mP^r$ where the curve lies.

\medskip
Finally, let us recall a few notions about rational ruled surfaces.

 \begin{defn} 
 We denote by $\mF_n:= \mP (\sO _{\mP^1} \oplus \sO_{\mP^1}(-n))$ a {\em Hirzebruch surface} of invariant $n$, by $C_0$ the unique (if $n >0$) unisecant curve with $C_0^2 <0$ and by $L$ a line of its ruling.
 \end{defn}

 It is well-known that $\Pic(\mF_n) = \mZ [C_0] \oplus \mZ [L ]$, where $C_0^2 = -n$, $C_0 \cdot L= 1$ and $L^2 =0$.

 \begin{rmk}
 \label{fritto}
 If $H = C_0 + \beta L$ is a very ample divisor on $\mF_n$, then the associated morphism $\varphi_H$ embeds $\mF_n$ in $\mP^r$ as a rational normal ruled surface $R$ of degree $r-1$, where $r:= H^2 +1$.
 With an easy computation, one can  see that $\beta = (r+n-1)/2$.

Finally, we recall that the canonical divisor of $R$ is 
 $K \sim -2H+(r-3)L$.
 \end{rmk}

In the sequel we will treat curves on (possibly embedded) Hirzebruch surfaces.

 \begin{defn} 
 \label{calcol}
 If $X \subset \mF_n$ is a $\gamma$-gonal curve, we say that {\em the gonality of $X$ is computed by the ruling of $\mF_n$} if $\gamma = X \cdot L$ (as far as $n \ge 1$).
\end{defn}


 \section {The slope inequalities}

 Let us recall a definition that generalizes the notion of the gonality of a curve.

\begin{defn} 
The {\it $r$-th gonality} of a curve $X$ is defined to be 
$$
\begin{aligned}
d_r:= & \quad \min\{d \in \mN \;| \; \hbox{$X$ admits a linear series $g^r_d$} \} = \\
{} = & \quad \min\{\deg L \;| \; \hbox{$L$ line bundle on $X$ with $h^0(L) \ge r+1$} \} .
\end{aligned}
$$
Moreover,  $(d_1, d_2, d_3, \dots, d_{g-1})$ is called the {\em gonality sequence} of $X$.
\end{defn}

Recall that
 the gonality sequence is strictly increasing and weakly additive (see \cite[Lemma 3.1]{LM}).

\begin{lem} 
\label{incre}
For any $\gamma$--gonal curve $X$ of genus $g$ and gonality sequence $(d_1, d_2, \dots, d_{g-1})$, the following properties hold:
\begin{itemize}
\item $d_1 < d_2 < d_3 < \cdots < d_{g-1}$, where $d_1 = \gamma$ and $d_{g-1}= 2g-2$;
\item $d_{r+s} \le d_{r} + d_{s}$ for all $r,s$; 
\item $d_r \le r \gamma$, for all $r$. 
\end{itemize}
\end{lem}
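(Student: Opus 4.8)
The plan is to prove the three properties of Lemma~\ref{incre} essentially from the definition of $d_r$ together with Riemann--Roch, since each is a standard consequence.

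\textbf{Strict monotonicity.} First I would establish $d_1 = \gamma$ and $d_{g-1} = 2g-2$. The equality $d_1 = \gamma$ is immediate: a $g^1_d$ is exactly a pencil of degree $d$, so the smallest such $d$ is the gonality. For $d_{g-1} = 2g-2$, the canonical bundle $K_X$ has degree $2g-2$ and $h^0(K_X) = g$, so $d_{g-1} \le 2g-2$; conversely, any line bundle $L$ with $h^0(L) \ge g$ must by Riemann--Roch have $\deg L \ge 2g-2$ (if $\deg L \le 2g-3$ then $h^0(L) \le \deg L + 1 - g + h^1(L)$ forces, via Clifford or a direct Riemann--Roch count using $h^0(K_X - L) \le g-1$, the bound $h^0(L) \le g-1$), giving the reverse inequality. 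For the strict inequalities $d_r < d_{r+1}$: given a line bundle $L$ with $h^0(L) = r+1$ computing $d_r$, subtracting a general point $p$ from the base-point-free part gives $h^0(L(-p)) = r$, hence $d_{r-1} \le \deg L - 1 = d_r - 1$; iterating (or rather reading this as $d_{r-1} < d_r$ for every relevant $r$) yields the chain. One must check $d_r$ is actually defined for all $1 \le r \le g-1$, which follows since $K_X$ provides a $g^{g-1}_{2g-2}$ and one can peel off points.

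\textbf{Subadditivity $d_{r+s} \le d_r + d_s$.} Here I would take line bundles $L$ with $h^0(L) \ge r+1$, $\deg L = d_r$, and $M$ with $h^0(M) \ge s+1$, $\deg M = d_s$, and consider $L \otimes M$. The multiplication map on global sections shows $h^0(L \otimes M) \ge h^0(L) + h^0(M) - 1 \ge r + s + 1$ (the $-1$ coming from the fact that a product of two sections spaces of dimensions $a$ and $b$ spans a space of dimension at least $a + b - 1$, by the base-point-free pencil trick applied to a sub-pencil, or simply because fixing a nonzero section $t_0 \in H^0(M)$ the sections $s \cdot t_0$ for $s \in H^0(L)$ together with $s_0 \cdot t$ for $t \in H^0(M)$ are linearly independent in appropriate position --- more carefully one uses that if $V \subseteq H^0(L)$, $W \subseteq H^0(M)$ are subspaces with $\dim V = a \ge 2$, then $\dim(V \cdot W) \ge \dim V + \dim W - 1$). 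Hence $\deg(L \otimes M) = d_r + d_s \ge d_{r+s}$.

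\textbf{The bound $d_r \le r\gamma$.} This is the special case $s$-fold sum of the gonal pencil: taking $A$ a $g^1_\gamma$ and iterating subadditivity, $d_r \le d_1 + d_1 + \cdots + d_1 = r\gamma$; equivalently, $h^0(A^{\otimes r}) \ge r+1$ while $\deg A^{\otimes r} = r\gamma$. The main obstacle, such as it is, is the dimension-count lemma $\dim(V\cdot W) \ge \dim V + \dim W - 1$ underlying subadditivity: one should cite the base-point-free pencil trick (or \cite[Lemma 3.1]{LM}, which the text already invokes for weak additivity) rather than reprove it. Everything else is a routine Riemann--Roch bookkeeping exercise, so I would keep the write-up short and lean on the cited reference for the additivity.
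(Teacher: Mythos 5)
Your proof is correct, but note that the paper itself offers no proof of this lemma: it is simply recalled from \cite[Lemma 3.1]{LM}, so there is nothing internal to compare against. Your argument is the standard one underlying that reference (strict monotonicity by removing a general point from a minimal $g^r_{d_r}$, the endpoint values from Riemann--Roch and Clifford, subadditivity from the multiplication map, and $d_r\le r\gamma$ by iterating with the gonal pencil), and it goes through; the only step deserving a precise citation is the bound $\dim(V\cdot W)\ge \dim V+\dim W-1$, which is most cleanly justified by the finiteness of the fibres of $\mathbb{P}(V)\times\mathbb{P}(W)\to\mathbb{P}(H^0(L\otimes M))$ (an effective divisor admits only finitely many decompositions), rather than by the base-point-free pencil trick, which is a statement about the kernel of that map in a special case.
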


\begin{rmk}
The gonality sequence is defined up to $d_{g-1}$, since $d_r \ge 2g-1$ for $r \ge g$. Therefore, by the Riemann-Roch Theorem, $d_r = r+g$, for all $r\ge g$, 
as observed in \cite[Remark 4.4 (b)]{LN}.
\label{gimenuno}
 \end{rmk}

It is clear that, if the bound $d_r \le r \gamma$ is reached for all $r$ up to a certain integer $r_0$, then the sequence 
$$
\left( \frac{d_r} r \right)_{r=1, \dots, r_0}
$$ is constant and equal to the gonality $\gamma$. Otherwise, we can compare the above ratios: let us recall the following notion.

\begin{defn} 
 The relation 
\begin{equation}
\label{nonviol}
\frac{d_r}{ r }\ge \frac{d_{r+1} } {r+1}.
\end{equation}
is called the {\it ($r$-th) slope inequality}.
\end{defn}

\begin{rmk}
\label{ossBN}
Assume that, for some $r$, the slope inequality does not hold, i.e.
$$
\frac{d_r}{ r }< \frac{d_{r+1} } {r+1}
$$
hence $(r+1) d_r < r d_{r+1}$. Using this inequality in the computation of the corresponding Brill--Noether number (see, for instance, \cite{ACGH}), we obtain that
$$
\rho(d_r,r,g) = g-(r+1)(g-d_r+r) < r(d_{r+1} -g-r-1) \le 0.
$$
Therefore $\rho(d_r,r,g)$ is strictly negative.
\end{rmk}

\begin{defn} 
A curve $X$ of genus $g$ is called {\em Brill--Noether general } if $\rho(d_r,r,g) \ge 0$, for all $1 \le r \le g-1$.
\end{defn}

As a straightforward consequence, we have the following fact.

\begin{prop}
Let $X$ be a Brill--Noether general curve of genus $g$ and gonality $\gamma$. Then
\begin{equation}
\label{catena}
\gamma=d_1 \ge \frac{d_2 }{ 2} \ge \frac{d_3 }{ 3} \ge \frac{d_4 }{ 4} \ge \cdots \ge \frac{d_r }{ r} \ge \frac{d_{r+1} }{ {r+1} } \cdots
 \ge\frac{ d_{g-1} } {g-1} = 2,
\end{equation}
i.e. all the slope inequalities hold.
\end{prop}

\begin{rmk}
\label{rem2}
Nevertheless, also ``special" types of curves satisfy all the slope inequalities. For instance, in \cite[Remark 4.5]{LN}, one can find the explicit values of the gonality sequence of a $\gamma$-gonal curve $X$ in the following cases:

- if $\gamma =2$ ($X$ hyperelliptic);

- if $\gamma =3$ ($X$ trigonal);

- if $\gamma =4$ and $X$ is bielliptic;

- if $X$ is the general fourgonal curve.

\medskip
\noindent
In all the cases above, all the slope inequalities hold.

\end{rmk}

 For this reason, from now on we will assume
 $ \gamma \ge 4$.


 \section {Extremal curves and gonality}

 Let us first recall a result (see \cite[III, Theorem 2.5]{ACGH}) that turns out to be important in the sequel since it describes the geometry of extremal curves. Let us keep the notation introduced in Section 2.

\begin{thm}
\label{acgh1} 
Let $d$ and $r$ be integers such that $r \ge 3$, $d \ge 2r+1$. Then extremal curves $X \subset \mP^r$ of degree $d$ exist and any such a curve is one of the following:
\begin{itemize}
\item[(i)] The image of a smooth plane curve of degree $k$ under the Veronese map 
$ \mP^2 \rightarrow \mP^5$. In this case $r=5$, $d=2k$.
\item[(ii)] A non-singular member of the linear system $|mH+L|$ on a rational normal ruled surface. In this case $\epsilon =0$.
\item[(iii)] A non-singular member of the linear system $|(m+1)H- (r-\epsilon-2)L|$ on a rational normal ruled surface. 
\end{itemize}
\end{thm}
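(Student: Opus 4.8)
\textbf{Proof proposal for Theorem \ref{acgh1}.}

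The plan is to follow the classical Castelnuovo analysis, realizing the equality case geometrically. First I would take a general hyperplane section $D = X \cap \mP^{r-1}$, which consists of $d$ points in uniform position in $\mP^{r-1}$. The key numerical input is the Castelnuovo lemma on the Hilbert function of points in uniform position: with $m = m_{d,r}$ and $\epsilon = \epsilon_{d,r}$ as in \eqref{degm}, one has $h_D(k) := \dim\langle\text{image of }H^0(\mP^{r-1},\sO(k))\text{ in }H^0(D,\sO_D(k))\rangle \ge \min\{d, k(r-1)+1\}$ for $k \le m$, with the genus bound \eqref{casbo} being exactly the sum, over $k \ge 1$, of the ``failure'' $d - h_D(k)$. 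Hence $g(X) = \pi(d,r)$ forces $h_D(k) = k(r-1)+1$ for every $1 \le k \le m-1$ and $h_D(m) = d$: the points $D$ impose the minimum possible number of conditions on hypersurfaces of each degree up to $m$. This is the rigidity I would exploit.

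Next I would extract geometry from this rigidity. Since $h_D(2) = 2(r-1)+1 = 2r-1$, the points of $D$ lie on $\binom{r+1}{2} - (2r-1) = \binom{r-1}{2}$ linearly independent quadrics in $\mP^{r-1}$; by the uniform position lemma and a standard argument (this is exactly the step in Castelnuovo's theory, see \cite[III, §2]{ACGH}), the intersection of these quadrics is a variety $V'$ of minimal degree $r-2$ in $\mP^{r-1}$ — either a rational normal curve, or (if $r = 6$) the Veronese surface, or a rational normal scroll. Lifting to $X$: the quadrics through $D$ lift to quadrics through $X$ (since $h^0(\mP^{r-1},\sO(2)) \to h^0(X,\sO_X(2))$ and $h^0(\sO(2)) \to h^0(\sO_D(2))$ interact correctly via the exact sequence twisting by $-1$), and their intersection is a surface $V \subset \mP^r$ of minimal degree $r-1$ whose hyperplane section is $V'$. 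So $X$ lies on a surface of minimal degree: either the Veronese surface in $\mP^5$ (giving case (i), with $X$ the preimage of a smooth plane curve of degree $k$, forcing $d = 2k$, $r=5$), or a rational normal scroll $R$, i.e. an embedded Hirzebruch surface $\mF_n$ (a cone being excluded since $X$ is smooth and would have to pass through the vertex, or handled separately).

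Finally, on $R \cong \mF_n$ embedded by $H = C_0 + \beta L$ with $\beta = (r+n-1)/2$ (Remark \ref{fritto}), I would write the class of $X$ as $X \sim aH + bL$ and pin down $(a,b)$ from two numerical constraints: the degree $X \cdot H = d$ and the genus $g(X) = \pi(d,r)$ computed by adjunction on $R$, using $K_R \sim -2H + (r-3)L$. The degree equation gives $a(r-1) + b = d$; feeding this and the adjunction formula into the requirement $g = \pi(d,r)$ — and using \eqref{degm} to substitute $d - 1 = m(r-1) + \epsilon$ — forces, after a short computation, $a = m$ with $b = \epsilon$ when $\epsilon = 0$ (case (ii), $X \sim mH + L$... more precisely one checks the two natural solutions), or $a = m+1$ with the complementary value $b = -(r-\epsilon-2)$ (case (iii), $X \sim (m+1)H - (r-\epsilon-2)L$); smoothness and irreducibility of a general such member, together with existence of extremal curves, follow from a Bertini-type argument once one checks the linear system is base-point free, which is a direct cohomology computation on $\mF_n$. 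The main obstacle is the middle step: getting from ``$D$ imposes minimal conditions on quadrics'' to ``$X$ lies on a minimal-degree surface'' rigorously requires the uniform position principle and a careful comparison of the Hilbert functions of $X$, of $D$, and of the varieties cut by the quadrics — this is the technical heart of Castelnuovo theory and I would either reproduce it carefully or, as the excerpt does, cite \cite[III, Theorem 2.5]{ACGH} for the full details while indicating the structure above.
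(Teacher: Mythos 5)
The paper does not prove this statement at all --- it is quoted verbatim from \cite[III, Theorem 2.5]{ACGH} --- and your outline is precisely the standard Castelnuovo-theoretic argument carried out there (uniform position, the Hilbert-function rigidity forced by $g=\pi(d,r)$, Castelnuovo's lemma placing the general hyperplane section on a rational normal curve, lifting to a surface of minimal degree, and adjunction plus the degree equation to pin down the class, where the concavity of the genus in $a$ for $X\sim aH+bL$ with $a(r-1)+b=d$ forces $a\in\{m,m+1\}$). Apart from two harmless slips in the sketch (the Veronese surface arises as the minimal-degree surface $V\subset\mP^5$, i.e.\ $r=5$, not as $V'$ with $r=6$; and in case (ii) the degree equation gives $b=\epsilon+1=1$, not $b=\epsilon$), this is the same route as the cited source, so there is nothing to compare beyond the reference itself.
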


In particular, any irreducible extremal curve is smooth.

\begin{rmk}
\label{remarcoa}
Observe that we assumed from the beginning that $d\ge 2r+1$. Namely, if $d< 2r$,
by Clifford Theorem 
the $g_d^r$ is non-special and we obtain $g=d-r$. 
In particular, we have $r>g$; by Remark \ref{gimenuno} the gonality sequence is known and the $r$-th slope inequality holds.

Moreover, if $d=2r$ then $m=2$ and $\epsilon =1$. Therefore $\pi(d,r) = r+1$. Hence, if $X$ is an extremal curve, then $d_r= d_{g-1}=2g-2=2r$. By Remark \ref{gimenuno}, we have $d_{r+1}= d_g= 2g=2r+2$, hence the $r$-th slope inequality holds.
\end{rmk}

\begin{rmk}
\label{remarcob}
In the sequel, we shall not consider the case (i) in the Theorem \ref{acgh1} where the extremal curve is the image of smooth plane curves under the Veronese map, being the gonal sequence of smooth plane curves completely understood by Max Noether's Theorem (see, for instance,
\cite[Theorem 3.14]{Cili}). 

More precisely, a plane curve of degree $k \ge 5$ satisfies
$$
d_r = \left\{
\begin{array}{ll}
\alpha d - \beta, & {\rm if}\ r < g=\frac{(d-1)(d-2)}{2}\\
r+g, & {\rm if}\ r\ge g,\\
\end{array}
\right.
$$
where $\alpha$ and $\beta$ are the uniquely determined integers with $\alpha \ge 1$ and $0 \le \beta \le \alpha$ such that $r=\frac{\alpha (\alpha +3)} {2}$.

In particular, as observed in \cite[Proposition 4.3]{LM}, whenever $\beta \neq 0$, the $r$-th slope inequality is satisfied, while if $\beta =0$ and $\alpha \le k-4$, such an inequality is violated.

In the case (i) of Theorem \ref{acgh1}, we have $r=5$, so $\alpha=2$ and $\beta =0$. It follows that
if $k \ge 6$, the $5$-th slope inequality is violated.
\end{rmk}

The converse of the cases $(ii)$ and $(iii)$ of Theorem \ref{acgh1} holds: namely the above classes of curves on a ruled surface force the curve to be extremal, as the following result shows.

\begin{prop}
\label{fiore}
Let $X \subset R \subset \mP^r$, where $X$ is a smooth curve of degree $d$ and $R$ is a rational normal ruled surface. 
Setting $m= m_{d,r}$ and $\epsilon= \epsilon_{d,r}$, if
$$
X \in 
\left\{
\begin{matrix} 
|mH+L| \hfill \\
or \hfill \\
|(m+1)H- (r-\epsilon-2)L|\\ 
\end{matrix}
\right.
$$
then $X$ is an extremal curve in $ \mP^r$.
\end{prop}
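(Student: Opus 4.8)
The plan is to verify directly that in each of the two cases the curve $X$ has degree $d$ and genus $g(X) = \pi(d,r)$, so that by Definition \ref{estre} it is extremal; the only point needing care is that the embedding $R \subset \mP^r$ is given by a divisor of the expected type, so that the arithmetic bookkeeping of Remark \ref{fritto} applies. First I would record that $R$ is embedded by a very ample $H = C_0 + \beta L$ with $r = H^2 + 1$ and $\beta = (r+n-1)/2$, and that $\deg X = X \cdot H$. From the class of $X$ one computes $X \cdot H$ using the intersection numbers $C_0^2 = -n$, $C_0 \cdot L = 1$, $L^2 = 0$, and $H \cdot L = 1$, $H^2 = r-1$. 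In case $X \in |mH + L|$ one gets $X \cdot H = m(r-1) + 1$, and in case $X \in |(m+1)H - (r-\epsilon-2)L|$ one gets $X \cdot H = (m+1)(r-1) - (r-\epsilon-2)$. The first step is to check that each of these equals $d$: by \eqref{degm} we have $d - 1 = m(r-1) + \epsilon$, so $m(r-1) + 1 = d - \epsilon + 1$, which forces $\epsilon = 0$ (consistently with case (ii) of Theorem \ref{acgh1}); and $(m+1)(r-1) - (r - \epsilon - 2) = m(r-1) + (r-1) - (r-\epsilon-2) = m(r-1) + \epsilon + 1 = d$, which holds for every $\epsilon$. So in the first case one should observe that the hypothesis implicitly imposes $\epsilon = 0$ (otherwise $|mH+L|$ cannot contain a curve of degree $d$ with this $m$), while in the second case there is no constraint.

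Next I would compute $g(X)$ via adjunction on $R$, $2g(X) - 2 = X \cdot (X + K_R)$, using $K_R \sim -2H + (r-3)L$ from Remark \ref{fritto}. In case $X \in |mH+L|$ one has $X + K_R \sim (m-2)H + (r-2)L$, so
$$
X \cdot (X + K_R) = (mH + L)\cdot\bigl((m-2)H + (r-2)L\bigr) = m(m-2)(r-1) + m(r-2) + (m-2),
$$
and a short manipulation, using $\epsilon = 0$, shows this equals $2\pi(d,r) - 2 = 2m\bigl(\tfrac{m-1}{2}(r-1)\bigr) - 2$. In case $X \in |(m+1)H - (r-\epsilon-2)L|$ one has $X + K_R \sim (m-1)H + (\epsilon - 1)L$, and expanding $X \cdot (X + K_R)$ and simplifying — again purely a matter of collecting the coefficients of $r-1$ and the constant terms — one lands on $2\pi(d,r) - 2 = 2m\bigl(\tfrac{m-1}{2}(r-1) + \epsilon\bigr) - 2$. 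Finally, since $d = X \cdot H \ge 2r+1$ (this should be checked, or assumed as part of the setup, from $m \ge 2$) and $X$ is smooth and irreducible, the restriction of $|H|$ to $X$ is a simple $g^r_d$, so $X$ satisfies \eqref{maxgen} and is extremal in $\mP^r$.

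The routine part is the two adjunction computations; the genuine obstacle is the bookkeeping around which $m$, $\epsilon$, $n$, $\beta$ are admissible — in particular, confirming that for the given class the linear system $|H|$ on $R$ really cuts out a complete and simple linear series of the claimed degree and projective dimension on $X$, and that the hypothesis $d \ge 2r+1$ (needed for Definition \ref{estre}) is automatic. One also has to make sure that in case (ii) the constraint $\epsilon = 0$ is either hypothesized or derived, since otherwise the statement is vacuous; I expect the cleanest exposition is to note first that $X \in |mH+L|$ with $m = m_{d,r}$ and $\deg X = d$ together force $\epsilon = 0$, and then proceed. Everything else is a direct verification against the formula for $\pi(d,r)$ in \eqref{casbo}.
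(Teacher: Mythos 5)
Your proposal is correct and follows essentially the same route as the paper: compute $\deg X = X\cdot H$ to identify $d$ (and thereby deduce $\epsilon=0$ in the case $X\in|mH+L|$), then compute the genus by adjunction with $K_R\sim -2H+(r-3)L$ and match it against $\pi(d,r)$. The only blemish is the small arithmetic slip $m(r-1)+1=d-\epsilon+1$ (it should be $d-\epsilon$), which does not affect the intended conclusion $\epsilon=0$.
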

\begin{proof}
The canonical divisor $K$ of the surface $R$ can be written as $K \sim -2H + (r-3)L$. Therefore we can apply the Adjuncton Formula on $R$ (where $g$ denotes the genus of $X$):
$$
2g-2 = (K+X) \cdot X.
$$
In the first case $X \sim mH+L$ we then obtain
$$
2g-2 = ((m-2)H + (r-2) L) \cdot (mH+L).
$$
Taking into account that $H^2 =r-1$, we finally obtain
$$
2g = m(m-1)(r-1).
$$
On the other hand, $d= \deg(X) = X \cdot H = mH^2+L\cdot H = m(r-1)+1$, hence $d-1=m(r-1)$. Therefore, from \eqref{degm}, we have that $\epsilon =0$ and, so, $\pi(d,r)=g$ as requested (see (\ref{casbo})).

In the second case $X \sim (m+1)H- (r-\epsilon-2)L$ so we get
$$
2g-2 = ((m-1)H + (\epsilon -1) L) \cdot ((m+1)H- (r-\epsilon-2)L).
$$
It is immediate to see that
$$
2g = m(m-1)(r-1) +2m \epsilon,
$$
so again $g= \pi(d,r)$.
\end{proof}

The two results above characterize embedded extremal curves in terms of rational ruled surfaces.
Now we show that any smooth irreducible curve on a Hirzebruch surface, under a certain assumption on its class, can be embedded in a projective space as an extremal curve.
 
To do this,  the following known result  will be useful (see, for instance, \cite[Ch.V, Corollary 2.18]{H}).
\begin{prop}
\label{cuori}
Let $D$ be the divisor $aC_0 + b L$ on the rational ruled surface $\mF_n$. Then:
\item{(a)} $D$ is very ample $ \iff D$ is ample $\iff a>0$ and $b>an$;
\item{(b)} the linear system $|D|$ contains an irreducible smooth curve $\iff$ it contains an irreducible curve 
$\iff a=0, b=1$ or $a=1, b=0$ or $a>0, b>an$ or $a>0, b=an, n>0$.
\end{prop}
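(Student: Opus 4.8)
The plan is to deduce everything from intersection theory on $\mF_n$, the Nakai--Moishezon criterion and Bertini's theorems, following essentially \cite[Ch.~V, \S 2]{H}. For part (a), the implication ``very ample $\Rightarrow$ ample'' is trivial, and for ``$D$ ample $\Rightarrow a>0,\ b>an$'' I would simply test $D$ against the two distinguished curves, using $D\cdot L=a$ and $D\cdot C_0=b-an$. For the remaining implication I would split into two steps. First, I would check that the numerical conditions $a>0,\ b>an$ already force ampleness by Nakai--Moishezon: $D^2=a(2b-an)>0$, while for an irreducible curve $C\equiv\alpha C_0+\beta L$ different from $C_0$ and from a fibre one has $\alpha\ge 1$ and $\beta\ge\alpha n$ (such a $C$ meets both $C_0$ and $L$ non-negatively), whence $D\cdot C=a\beta+\alpha(b-an)>0$, and also $D\cdot C_0=b-an>0$, $D\cdot L=a>0$.

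Second — and this is where the real work lies — I would upgrade ``ample'' to ``very ample''; I expect this to be the main obstacle, being the only part not formally forced by intersection numbers. I would argue by induction on $a+b$, restricting along a fibre and along $C_0$ via the sequences $0\to\sO(D-L)\to\sO(D)\to\sO_L(D)\to 0$ and $0\to\sO(D-C_0)\to\sO(D)\to\sO_{C_0}(D)\to 0$ in order to propagate separation of points and tangent vectors, with base cases $D\sim L$ and $D\sim C_0+nL$; the latter morphism is the classical contraction of $C_0$ onto the vertex of the cone over a rational normal curve, which already separates everything off $C_0$. Alternatively one can exhibit $\varphi_D$ directly as a Segre--Veronese embedding when $n=0$ and an isomorphic projection of such when $n>0$.

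For part (b), the implication ``contains a smooth irreducible curve $\Rightarrow$ contains an irreducible curve'' is immediate. Conversely, given an irreducible $C\in|D|$: if $C=C_0$ or $C$ is a fibre we land in the classes $a=1,b=0$ and $a=0,b=1$; otherwise $\pi|_C$ is non-constant, so $a=C\cdot L\ge1$, and $C\ne C_0$ gives $b-an=C\cdot C_0\ge0$ for $n>0$ (for $n=0$ one invokes the symmetry of the two rulings), hence either $b>an$ or $b=an$ with $n>0$. Finally, to exhibit a smooth irreducible member in each listed class: $a=1,b=0$ and $a=0,b=1$ are realised by $C_0$ and by a fibre; for $a>0,\ b>an$ part (a) makes $D$ very ample, hence base-point free, and Bertini yields a smooth irreducible general member; for $a>0,\ b=an,\ n>0$ we have $D\sim a(C_0+nL)$ with $|C_0+nL|$ base-point free, so $|D|$ is base-point free, and since $D^2=a^2n>0$ its morphism is not composed with a pencil, so Bertini again gives a smooth irreducible general member. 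Apart from the very-ampleness half of (a), every step is a short computation with intersection numbers on $\mF_n$ together with an application of Bertini.
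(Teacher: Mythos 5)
The paper does not prove Proposition \ref{cuori} at all: it is quoted as a known result with an explicit pointer to \cite[Ch.~V, Corollary 2.18]{H}, so there is no internal argument to compare yours with. Your sketch is a reconstruction of the standard proof and is essentially correct where it is explicit: the numerical implications in (a) (testing an ample $D$ against $C_0$ and $L$, and verifying Nakai--Moishezon from $a>0$, $b>an$ using $\alpha\ge 1$ and $\beta\ge\alpha n$ for irreducible curves other than $C_0$ and the fibres) and all of (b), including the Bertini argument for $D\sim a(C_0+nL)$, where base-point-freeness gives smoothness in characteristic zero and $D^2=a^2n>0$ together with the birationality of $\varphi_{C_0+nL}$ onto the cone rules out composition with a pencil; your appeal to the symmetry of the rulings to exclude $a\ge 2$, $b=0$ on $\mF_0$ is also the right move.

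The one soft spot is exactly the step you flag, ample $\Rightarrow$ very ample. As written, the induction is shaky because neither proposed base case is very ample: $|L|$ maps onto $\mP^1$ and $|C_0+nL|$ contracts $C_0$, so ``propagating separation of points and tangent vectors'' from them needs nontrivial extra bookkeeping, together with the $H^1$-vanishings required to lift sections along the two restriction sequences (these do hold on $\mF_n$, but must be invoked). The cleanest standard completion is to decompose, for $a>0$ and $b>an$,
$D\sim \bigl(C_0+(n+1)L\bigr)+(a-1)\bigl(C_0+nL\bigr)+(b-an-1)L$:
the first summand is very ample (it embeds $\mF_n$ as a rational normal scroll of degree $n+2$ in $\mP^{n+3}$, checkable on explicit sections), the remaining summands are generated by global sections, and a very ample divisor plus globally generated ones is very ample. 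Your alternative via a Segre--Veronese works verbatim only for $n=0$; for $n>0$ the scroll has degree $2b-n$, so it is not an isomorphic projection of any Segre--Veronese image of $\mP^1\times\mP^1$, and one must instead factor $\varphi_D$ through the embedding $\mF_n\hookrightarrow\mP^1\times\mP^{n+1}$ followed by a Segre map and a projection. With one of these repairs the proof is complete.
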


The Hirzebruch surface $\mF_0$ is isomorphic to $\mP^1 \times \mP^1$, so $\Pic (\mF_0)$ is generated by two lines belonging to distinct rulings. But also in this case, we denote these generators by $ C_0 $ and $L$ (like in the case of $\mF_n$, where $n >0$), even if their roles can be exchanged.

\begin{thm}
\label{everex}
Let $X \sim \gamma C_0 + \lambda L$ be an irreducible smooth curve on $\mF_n$ with $\gamma \ge 2$ and not isomorphic to a plane curve.


\noindent
Dividing $\lambda -n-1$ by $\gamma -2$, let us denote by $\beta$ the quotient and $\epsilon$ the remainder, i.e.
$$
\beta:= \left[ \frac{\lambda -n-1}{\gamma -2}\right] =\frac{\lambda -n-1-\epsilon}{\gamma -2}, \quad 0 \le \epsilon \le \gamma -3.
$$
Moreover, set
$$
 r:=2\beta +1-n \quad \hbox{and} \quad d:=\gamma(\beta - n) + \lambda.
$$
Consider the complete linear system $|H|$ on $\mF_n$ given by
$$
H \sim C_0 + \beta L.
$$
Then
\begin{itemize}

\item[$i)$] $\beta > n$, for all $n \ge 0$;

\item[$ii)$] the morphism $\varphi_H$ embeds $\mF_n$ in $\mP^r$;

\item[$iii)$] $\varphi_H(X)$ is a curve of degree $d$.
\end{itemize}

\noindent
Assume in addition that 
$$
\lambda \ge \frac{\gamma(\gamma +n -2)}{2}.
$$
 Then
\begin{itemize}
\item[$iv)$] 
$m_{d,r} = \gamma -1$ and $ \epsilon_{d,r} = \epsilon$;

\item[$v)$] $\varphi_H(X)$ is an extremal curve in $\mP^r$. 

\end{itemize}
\end{thm}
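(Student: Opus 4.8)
The strategy is to unwind the definitions: we have an explicit curve class $X \sim \gamma C_0 + \lambda L$ on $\mF_n$, an explicit very ample class $H \sim C_0 + \beta L$, and the claim amounts to checking numerical identities and then invoking Proposition \ref{fiore}. First I would establish $i)$: since $\beta = \lfloor (\lambda-n-1)/(\gamma-2)\rfloor$ and the extra hypothesis $\lambda \ge \gamma(\gamma+n-2)/2$ forces $\lambda - n - 1 \ge \gamma(\gamma+n-2)/2 - n - 1$, a direct estimate comparing this with $n(\gamma-2)$ (using $\gamma \ge 2$, and separating the small cases $\gamma = 2$ where $C_0$ itself must be checked against the "not a plane curve" hypothesis) gives $\beta > n$. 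Then $ii)$ is immediate from Proposition \ref{cuori}(a): $H \sim C_0 + \beta L$ with $\beta > n$ is very ample, so $\varphi_H$ embeds $\mF_n$ as a rational normal ruled surface $R \subset \mP^r$ with $r = H^2 + 1$; a one-line computation $H^2 = (C_0+\beta L)^2 = -n + 2\beta = 2\beta - n$ gives $r = 2\beta+1-n$, matching the definition. For $iii)$, $d = X \cdot H = (\gamma C_0 + \lambda L)\cdot(C_0 + \beta L) = -\gamma n + \gamma\beta + \lambda = \gamma(\beta - n) + \lambda$, again matching.

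The substantive part is $iv)$, the identification of the $m$-ratio and $\epsilon$-remainder of $\varphi_H(X) \subset \mP^r$. Recall $m_{d,r} = \lfloor (d-1)/(r-1)\rfloor$ and $d - 1 = m_{d,r}(r-1) + \epsilon_{d,r}$ with $0 \le \epsilon_{d,r} \le r-2$. I would compute $d - 1 = \gamma(\beta-n) + \lambda - 1$ and $r - 1 = 2\beta - n = 2(\beta - n) + n$, and show
$$
d - 1 = (\gamma-1)(r-1) + \epsilon
$$
by substituting and simplifying: the left side minus $(\gamma-1)(2\beta-n)$ should reduce, using $\lambda - n - 1 = (\gamma-2)\beta + \epsilon$, exactly to $\epsilon$. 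This is the key algebraic identity and where I expect the bookkeeping to be most delicate — one must carefully track how the definition $\beta(\gamma-2) = \lambda - n - 1 - \epsilon$ feeds in. Once the identity holds, it remains to check the inequality constraint $0 \le \epsilon \le r - 2$: the lower bound is given, and the upper bound $\epsilon \le \gamma - 3 \le r - 2$ follows provided $\gamma - 3 \le 2\beta - n - 1$, i.e. $\gamma \le 2\beta - n + 2 = r+1$, which should follow from $\beta > n$ together with $\lambda \ge \gamma(\gamma+n-2)/2$ (this is really where the extra hypothesis on $\lambda$ is needed — to make $\beta$ large enough relative to $\gamma$). Granted this, uniqueness of $(m,\epsilon)$ in \eqref{degm} forces $m_{d,r} = \gamma - 1$ and $\epsilon_{d,r} = \epsilon$.

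Finally, $v)$ follows from Proposition \ref{fiore}: we must exhibit $\varphi_H(X)$ as a member of $|mH + L|$ or of $|(m+1)H - (r-\epsilon-2)L|$ on $R$, with $m = m_{d,r} = \gamma - 1$. Translating $X \sim \gamma C_0 + \lambda L$ into the basis $\{H, L\}$ of $\Pic(R)$: since $H = C_0 + \beta L$, we have $C_0 = H - \beta L$, so $X \sim \gamma H - \gamma\beta L + \lambda L = \gamma H + (\lambda - \gamma\beta)L$. Now $\gamma = (\gamma - 1) + 1 = m + 1$, so $X \sim (m+1)H + (\lambda - \gamma\beta)L$, and one checks $\lambda - \gamma\beta = -(r - \epsilon - 2)$ using $r = 2\beta+1-n$ and $\lambda - n - 1 = (\gamma - 2)\beta + \epsilon$: indeed $\lambda - \gamma\beta = \lambda - (\gamma-2)\beta - 2\beta = (n + 1 + \epsilon) - 2\beta = \epsilon + 2 - r$. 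Hence $X \sim (m+1)H - (r - \epsilon - 2)L$, which is exactly case $(iii)$ of Proposition \ref{fiore}, so $\varphi_H(X)$ is extremal in $\mP^r$. (When $\epsilon = 0$ this is case $(ii)$ up to the identity $\lambda - \gamma\beta = 2 - r$, i.e. $(m+1)H - (r-2)L$; whether it coincides with the stated $|mH+L|$ form would need a brief remark, but Proposition \ref{fiore} covers both forms regardless.) The main obstacle throughout is purely the arithmetic of keeping the four relations among $\gamma, \lambda, n, \beta, \epsilon, r, d$ consistent; there is no geometric difficulty once Propositions \ref{fiore} and \ref{cuori} are in hand.
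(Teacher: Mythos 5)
Your parts $ii)$--$iv)$ follow essentially the paper's route (the paper gets $r$ from $h^0(\sO_{\mF_n}(C_0+\beta L))$ rather than from $H^2+1$, and bounds $\epsilon\le r-2$ via the equivalent inequality $\lambda\le\beta\gamma$ rather than via $\gamma-3\le r-2$, but both reductions lead to exactly the hypothesis $2\lambda\ge\gamma(\gamma+n-2)$ and are correct). Your part $v)$ is a genuinely different and cleaner route: instead of recomputing the genus by adjunction on $\mF_n$ and comparing with $\pi(d,r)$ as the paper does, you rewrite $X\sim\gamma C_0+\lambda L$ in the basis $\{H,L\}$, and the identity $\lambda-\gamma\beta=\epsilon+2-r$ exhibits $X\sim(m+1)H-(r-\epsilon-2)L$ with $m=m_{d,r}=\gamma-1$, so Proposition \ref{fiore} applies directly. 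This checks out and reuses work already done in the paper; the only price is that it depends on $iv)$ having identified $m_{d,r}$ and $\epsilon_{d,r}$, which you have.

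The genuine gap is in $i)$. First, the hypothesis $\lambda\ge\gamma(\gamma+n-2)/2$ is introduced in the statement only for parts $iv)$ and $v)$; claims $i)$--$iii)$ are asserted without it, so you may not use it there. Second, even granting that hypothesis, the estimate you sketch does not close. What is needed for $\beta=\bigl[\tfrac{\lambda-n-1}{\gamma-2}\bigr]>n$ is $\lambda-n-1\ge(n+1)(\gamma-2)$, not merely $\lambda-n-1>n(\gamma-2)$, and the inequality $\gamma(\gamma+n-2)/2-n-1\ge(n+1)(\gamma-2)$ is equivalent to $\gamma^2-4\gamma+2\ge n(\gamma-2)$, which already fails for $\gamma=4$, $n\ge 2$. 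The input you are missing is the one the paper actually uses: by Proposition \ref{cuori}(b), irreducibility of $X$ forces $\lambda\ge\gamma n$ (for $n>0$) and $\lambda>0$ (for $n=0$), and this, \emph{together with} the hypothesis that $X$ is not isomorphic to a plane curve (which rules out $\lambda=\gamma$ on $\mF_1$, where $|C_0+L|$ contracts $C_0$ to a plane point) and the freedom to exchange the two rulings on $\mF_0$, is what drives the case analysis $n=0$, $n=1$, $n>1$. Your aside about ``the small cases $\gamma=2$'' points at the wrong hypothesis: for $\gamma=2$ the division by $\gamma-2$ is not even defined, so that case is excluded implicitly, whereas the non-plane-curve assumption is doing real work precisely at $n=1$. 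You should redo $i)$ along these lines (and note that the resulting inequality is delicate: one must verify $\lambda-n-1\ge(n+1)(\gamma-2)$, not just the strict inequality against $n(\gamma-2)$).
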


\begin{proof} Note first that, by
 assumption, $X$ is irreducible and smooth. Then, by Proposition \ref{cuori} and the assumption $\gamma \ge 2$, we have
$$
\lambda > 0, \quad \hbox{if} \quad n =0
$$
and
\begin{equation}
\label{disu}
\lambda \ge \gamma n, \quad \hbox{if} \quad n >0
\end{equation}

\noindent
$i)$
If $n =0$, it is clear that $\beta \ge 1 \iff \lambda -1 \ge \gamma -2$. If this is not the case, we observe that we can change the role of $\gamma$ and $\lambda$, since on $\mF_0 \cong \mP^1 \times \mP^1$ we can choose arbitrarily one of the two rulings.

\noindent
If $n=1$, then $\beta > 1$ if and only if $\lambda -2 > \gamma-2$ and this holds by the assumption that $X$ is not isomorphic to a plane curve.

Indeed, in general, any irreducible curve $\gamma C_0 + \lambda L$ 
on $\mF_1$ satisfies $\lambda \ge \gamma$ by \eqref{disu}. In particular, if $\lambda=\gamma$, we have  $\beta =1$ and $H = C_0 + L$. On one hand, it is clear that the linear system $|H|$ does not induce an embedding of $\mF_1$, as it maps surjectively to $\mP^2$ and corresponds to the contraction of the exceptional curve $C_0$; it is well known that this is  the blowing up morphism
$\mF_1 \to \mP^2$ of the plane in a point. 

On the other hand, under such a morphism, any smooth irreducible curve $X\sim \gamma (C_0 + L)$ is mapped isomorphically to a smooth degree $\gamma$ plane curve, which contradicts our assumption.

\noindent
If $n>1$, obviously $\beta > n$ if and only if
$$
\lambda -n-1 > n(\gamma-2).
$$
i.e. $\lambda > n(\gamma -1) +1$.
But this holds since $n > 1$ implies $n\gamma -n+1 < n\gamma$ and, by (\ref{disu}), $n\gamma \le \lambda$.

 \medskip
\noindent
$ii)$
Again by Proposition \ref{cuori}, the linear system $|H| = | C_0 + \beta L|$ is very ample if and only if $\beta > 0$ if $n =0$ or $\beta > n$ if $n >0$ and this holds by $(i)$. Hence $\varphi_H$ is an embedding.

\noindent
Moreover, we have the well--known formula (see, for instance \cite[Proposition 1.8 - (ii)]{BS}):
\begin{equation}
\label{stellina}
r+1 = h^0( \sO_{\mF_n} (C_0 + \beta L)) = 
2\beta +2 -n.
\end{equation}

 \medskip
\noindent
$iii)$
Therefore $\varphi_H$ embeds $X$ in $\mP^r$ as a curve of degree
$$
(\gamma C_0+ \lambda L) \cdot (C_0 + \beta L)=\gamma(\beta - n) + \lambda = d,
$$
as required.

 \medskip
\noindent
$iv)$
In order to show that 
$m_{d,r} = \gamma -1$ and $ \epsilon_{d,r} = \epsilon$, it is enough to prove that 
$$
d -1 -( \gamma -1)(r-1) = \epsilon \quad \hbox{and} \quad 0 \le \epsilon \le r-2.
$$
Clearly
$$
d -1 -( \gamma -1)(r-1) = \gamma(\beta - n) + \lambda -1 - ( \gamma -1)(2\beta -n ) = \lambda -n-1-\beta(\gamma -2)
$$
and, substituting the value of $\beta$, we obtain the requested equality.

In order to show that $\epsilon \le r-2$, note first that, for any $n \ge 0$, we have
$$
 \epsilon \le r-2 \iff \lambda -n-1-\beta(\gamma -2) \le 2\beta -n -1 \iff
 \lambda \le \beta \gamma.
 $$
Since $\epsilon \le \gamma -3$, clearly
$$
\beta =\frac{\lambda -n-1-\epsilon}{\gamma -2} \ge \frac{\lambda -n +2 - \gamma}{\gamma -2},
$$
so in order to show that $\lambda \le \beta \gamma$, it is enough to prove that
$$
\lambda \le \gamma \frac{\lambda -n +2 - \gamma}{\gamma -2}
\quad \iff \quad
2 \lambda \ge \gamma(\gamma +n -2)
$$
and this holds by assumption.

 \medskip
\noindent
$v)$ 
In order to prove that $\varphi_H(X) \subset \mP^r$ is an extremal curve, we compute the genus $g$ of $X$
using the Adjunction Formula obtaining
\begin{equation}
\label{genrig}
\begin{array}{ll}
2g-2& =(K_{\mF_n} +X) \cdot X =(-2C_0 -(2+n)L + \gamma C_0+\lambda L) \cdot (\gamma C_0+\lambda L)=\\
 &= 2(\lambda \gamma - \lambda -\gamma) - n \gamma (\gamma -1),\\
 \end{array}
\end{equation}
which yields
\begin{equation}
\label{genecaste}
g=\lambda \gamma - \lambda -\gamma +1 - \frac{n}{2} \gamma (\gamma -1) = (\lambda-1)(\gamma -1) - \frac{n}{2} \gamma (\gamma -1). 
\end{equation}
Now we can compute the Castelnuovo bound
$$
\pi(d,r) = m_{d,r}\left( \frac{m_{d,r}-1} 2 (r-1)+\epsilon_{d,r} \right) = 
(\gamma-1)\left( \frac{\gamma -2} 2 (2\beta -n)+\epsilon \right).
$$
Since
$$
2\beta -n = 2 \; \frac{\lambda -n-1-\epsilon}{\gamma -2} - n = \frac{2\lambda -2-2\epsilon - n \gamma }{\gamma -2}
$$
we obtain
$$
\pi(d,r) = (\gamma-1)\left( \frac{1} 2 (2\lambda -2-2\epsilon - n \gamma)+\epsilon \right) = (\gamma-1)\left(\lambda -1-\epsilon -n\gamma/2 + \epsilon
\right) 
$$
and, finally,
$$
\pi(d,r) =(\gamma-1)(\lambda -1 -n\gamma/2 ) = (\gamma-1)(\lambda -1) - \frac{n}{2} \gamma (\gamma -1). 
$$
Comparing this formula with (\ref{genecaste}), we see that $\pi(d,r) = g$ and hence $\varphi_H(X)$ is an extremal curve in $\mP^r$.
\end{proof}

\begin{rmk}
By the irreducibility of $X$, we have
$\lambda \ge \gamma n$ from Proposition \ref{cuori}. As a consequence, the additional assumption in Theorem \ref{everex}
$$
\lambda \ge \frac{\gamma(\gamma +n -2)}{2}.
$$
holds if $n \ge \gamma -2$.
\end{rmk}


In the sequel we will need to relate the gonality of $X$ with its $m$-ratio. Since Theorem \ref{acgh1} claims that the extremal curves, not isomorphic to plane curves, lie on a rational normal ruled surface, we here recall the following result of Martens (see \cite{M}) which describes such a relationship in the wider case of curves on ruled surfaces (see Definition \ref{calcol}).

\begin{thm}
\label{marty} 
Let $X$ be a reduced and irreducible curve on a Hirzebruch surface $\mF_n$ and assume that $X$ is not a fibre. Then the gonality of $X$ is computed by a ruling of the surface, unless $n=1$ and $X \sim \alpha(C_0 + L)$ for some $\alpha \ge 2$, in which case $X$ is isomorphic to a plane curve of degree $\alpha$ and its gonality is $\alpha -1$.
\end{thm}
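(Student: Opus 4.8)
The plan is to obtain $\mathrm{gon}(X)\le X\cdot L$ for free from the ruling and then to prove the reverse inequality, the only exception to which will be the quoted curve on $\mF_1$. (It suffices, and I will, to treat smooth $X$, which is the only case needed below.)

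First I would fix notation: write $X\sim\gamma C_0+\lambda L$ on $\mF_n$, so $\gamma=X\cdot L\ge 1$ since $X$ is not a fibre, and assume $\gamma\ge 2$ (if $\gamma=1$ then $X$ is a section, hence rational, and there is nothing to prove). By Proposition \ref{cuori} irreducibility forces $\lambda\ge\gamma n$ for $n\ge 1$, while for $n=0$ one has $\gamma,\lambda\ge 1$ and the two rulings may be exchanged, so I also assume $\gamma\le\lambda$ in that case. Restricting the ruling to $X$ is a morphism $X\to\mP^1$ of degree $\gamma$, so $\mathrm{gon}(X)\le\gamma$; and the Adjunction Formula on $\mF_n$ (the computation already displayed in \eqref{genrig}--\eqref{genecaste}) gives
$$g:=g(X)=(\gamma-1)(\lambda-1)-\tfrac n2\,\gamma(\gamma-1).$$
So the whole content is the lower bound $\mathrm{gon}(X)\ge\gamma$, except on $\mF_1$ when $X\sim\gamma(C_0+L)$, where I must instead check $\mathrm{gon}(X)=\gamma-1$.

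The main tool for the lower bound will be a genus comparison on $\mP^1\times\mP^1$. Assume $\mathrm{gon}(X)=d\le\gamma-1$ and fix a base-point-free gonal pencil $|A|$, with associated morphisms $\varphi_A,\varphi_L\colon X\to\mP^1$ of degrees $d$ and $\gamma$; I would form $\Phi=(\varphi_A,\varphi_L)\colon X\to\mP^1\times\mP^1$. Since $|A|$ is base-point-free, no member of it can be supported in a single ruling fibre (a moving pencil cannot lie inside a fixed finite scheme), so both projections of $\Phi$ are non-constant; set $Y=\Phi(X)$ and $e=\deg(X\to Y)$. If $e=1$ then $Y$ has bidegree $(\gamma,d)$, hence $p_a(Y)=(d-1)(\gamma-1)$, and since $X$ is the normalization of $Y$ we get $g\le(d-1)(\gamma-1)$, i.e. $d\ge\lambda-\tfrac n2\gamma$; for $n\ge 2$, combined with $\lambda\ge\gamma n$ this yields $d\ge\tfrac n2\gamma\ge\gamma$, and for $n=0$ it yields $d\ge\lambda\ge\gamma$ — in both cases contradicting $d\le\gamma-1$. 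If $e\ge 2$ then $e\mid d$, $e\mid\gamma$, and $\varphi_A,\varphi_L$ both factor through a degree-$e$ cover $X\to\widetilde Y$ whose target maps birationally onto a curve of bidegree $(d/e,\gamma/e)$ in $\mP^1\times\mP^1$; bounding $g(\widetilde Y)\le(d/e-1)(\gamma/e-1)$ and feeding it, via Riemann--Hurwitz for $X\to\widetilde Y$ (equivalently, the Castelnuovo--Severi inequality for $\varphi_L$ and $X\to\widetilde Y$), back into the genus formula should again give a contradiction when $n\ne 1$. This would settle every case with $n\ne 1$.

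The remaining case $n=1$ is the delicate one, since there the inequality $d\ge\lambda-\tfrac\gamma2$ is by itself too weak; here I would use the blow-down $\sigma\colon\mF_1\to\mP^2$ contracting $C_0$ (the morphism attached to $|C_0+L|$). Then $\bar X:=\sigma(X)$ is a plane curve of degree $X\cdot(C_0+L)=\lambda$ with a point of multiplicity $m_0:=X\cdot C_0=\lambda-\gamma$ at the blown-up point, and $X$ is its strict transform, hence its normalization. If $m_0=0$, i.e. $X\sim\gamma(C_0+L)$, then $\sigma|_X$ is an isomorphism onto a smooth plane curve of degree $\gamma$, and $\mathrm{gon}(X)=\gamma-1$ by Max Noether's theorem — precisely the exceptional case. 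If $m_0\ge 1$, projection from the $m_0$-fold point produces a $g^1_{\lambda-m_0}=g^1_\gamma$, whence $\mathrm{gon}(X)\le\gamma$; equality would follow from the known determination of the gonality of the normalization of a plane curve with a single $m_0$-fold point (the singular counterpart of the Max Noether-type statement recalled in Remark \ref{remarcob}), which gives $\mathrm{gon}(X)=\lambda-m_0=\gamma$ for $m_0\ge 1$. Carrying out this last step — that is, ruling out a gonal pencil on $X$ not induced by the projection from the multiple point — is, I expect, the main obstacle of the whole argument; the rest is routine bookkeeping with adjunction and intersection numbers on the Hirzebruch surface.
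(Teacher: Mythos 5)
The paper does not actually prove this statement: it is quoted from Martens \cite{M} as a known result, so there is no internal proof to compare against and your argument has to stand on its own. Your Castelnuovo--Severi setup does work cleanly in the birational case: for $e=1$ the bound $g\le (d-1)(\gamma-1)$, combined with $g=(\gamma-1)(\lambda-1)-\tfrac n2\gamma(\gamma-1)$ and the irreducibility constraint $\lambda\ge n\gamma$ (resp.\ $\lambda\ge\gamma$ on $\mF_0$ after exchanging rulings), gives $d\ge\gamma$ for all $n\ne 1$, and you correctly isolate both the failure of this estimate on $\mF_1$ and the exceptional curve $\gamma(C_0+L)$.

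There are, however, two genuine gaps. First, the composed case $e\ge 2$ is not handled, and the fix you sketch cannot work: Riemann--Hurwitz for $X\to\widetilde Y$ bounds $g(X)$ from \emph{below} by $e(g(\widetilde Y)-1)+1$, so feeding $g(\widetilde Y)\le (d/e-1)(\gamma/e-1)$ back in yields no upper bound on $g(X)$ and hence no contradiction with the adjunction genus. This case is not vacuous a priori: the ruling pencil $\varphi_L|_X$ can be decomposable for special smooth members of $|\gamma C_0+\lambda L|$ (e.g.\ curves of the form $y^\gamma=f(x)$ on $\mP^1\times\mP^1$), so excluding $d<\gamma$ there requires a different tool (a Serrano-type extension of pencils to the surface, or the vector-bundle method), not genus bookkeeping alone. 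Second, on $\mF_1$ with $1\le m_0<\gamma/2$ (where Castelnuovo--Severi only gives $d\ge \gamma/2+m_0$), you appeal to ``the known determination of the gonality of the normalization of a plane curve with a single $m_0$-fold point''. That statement is essentially equivalent to the $\mF_1$ case of the theorem itself, since blowing up the multiple point identifies the normalization with your curve on $\mF_1$ and the projection from the point with the ruling; as written the step is circular unless you explicitly import the Coppens--Kato theorem, and you rightly flag it as the main obstacle. A minor further point: the statement concerns reduced irreducible, possibly singular, curves, while you treat only smooth ones; this is harmless for the paper's applications but should be said.
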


\begin{rmk}
The exceptional case in Theorem \ref{marty} concerns curves of the type
$$
X \sim \alpha (C_0 + L)
$$
on $\mF_1$. We observe that such a situation never occurs in the framework of extremal curves of type (ii) and (iii) in Theorem 4.1.

Note first that for a rational ruled surface $R \subset \mP^r$ of degree $r-1$ the hyperplane divisor $H$ satisfies 
$H \sim C_0 + \frac{r-1- C_0^2 }{ 2} L$ by Remark \ref{fritto}. Hence
$R \cong \mF_1$ if and only if 
$$
H \sim C_0 + \frac{r}{ 2} L.
$$
It follows that, in case (ii), we have
$$
X \sim mH+L= m C_0 + \left(\frac{mr} { 2} +1\right)L
$$
and, so, $X \sim \alpha C_0 + \alpha L$ if and only if $\alpha=m= \frac{2}{ 2-r}$, which is not possible for $r \ge 2$.

\noindent
In case (iii) we have
$$
X \sim (m+1)H-(r-2-\epsilon)L= (m+1) C_0 +\left(\frac{(m+1)r} { 2}-r+2+\epsilon \right)L.
$$
hence $X \sim \alpha C_0 + \alpha L$ if and only if $\alpha=m+1$ and 
$$
m+1 = \frac{(m+1)r} { 2}-r+2+\epsilon \quad \Rightarrow \quad
\epsilon = (2-r) (m-1)/2.
$$
But $\epsilon \ge 0$ so we get a contradiction for $r \ge 3$.
\end{rmk}

The two results recalled above (Theorems \ref{acgh1} and \ref{marty}) 
lead to the following consequence, whose formulas 
immediately come from (\ref{degm}) and (\ref{maxgen}).

\begin{cor} 
\label{propc}
Let $X$ be a $\gamma$-gonal extremal curve in $\mP^r$ (where $r \ge3$) of degree $d$, genus $g$ and $m$-ratio $m$. If $X$ is not isomorphic to a plane curve, then there exists a rational normal ruled surface $R$ such that $X \subset R \subset \mP^r$ and, setting $\Pic(R) = \mZ[H] \oplus \mZ[L]$, either:

\begin{itemize}

\item [i)] if $X \in |mH+L|$ on $R$ then $\epsilon =0$ and 
$m= \gamma$.
Consequently,
\begin{equation}
\label{deggamma0}
d= \gamma (r-1) +1
\end{equation}
\begin{equation}
\label{maxgengamma0}
g= \frac {\gamma (\gamma-1)(r-1)} 2 .
\end{equation}

or

\item [ii)] if $X \in |(m+1)H- (r-\epsilon-2)L|$ on $R$ then 
$m= \gamma -1$. 
Consequently,
\begin{equation}
\label{deggamma1}
d= (\gamma -1)(r-1)+\epsilon +1
\end{equation}
\begin{equation}
\label{maxgengamma1}
g=(\gamma-1) \left[\frac {\gamma-2} 2 \, (r-1) + \epsilon
\right];
\end{equation}

\end{itemize}

\noindent
In particular, the gonal series $g^1_\gamma$ on $X$ comes from the restriction of the fibration $\pi: R \rightarrow \mP^1$ given by the ruling.
\end{cor}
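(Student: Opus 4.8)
The plan is to read the statement off from Theorems~\ref{acgh1} and~\ref{marty} by means of a couple of elementary intersection-number computations on the ruled surface; no genuinely new idea is required.

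Since $X$ is an extremal curve in $\mP^r$ with $r\ge 3$ (so $d\ge 2r+1$ by Definition~\ref{estre}) and is not isomorphic to a plane curve, Theorem~\ref{acgh1} provides a rational normal ruled surface $R$ of degree $r-1$ with $X\subset R\subset\mP^r$, and, writing $m=m_{d,r}$, $\epsilon=\epsilon_{d,r}$ and $\Pic(R)=\mZ[H]\oplus\mZ[L]$ with $H^2=r-1$, $H\cdot L=1$, $L^2=0$, it tells us that $X$ is a smooth member either of $|mH+L|$, in which case $\epsilon=0$, or of $|(m+1)H-(r-\epsilon-2)L|$. To pin down the gonality I would then invoke Martens' Theorem~\ref{marty} for $X\subset R\cong\mF_n$: since $X$ is not isomorphic to a plane curve and, by the Remark preceding this Corollary, the exceptional case $X\sim\alpha(C_0+L)$ on $\mF_1$ cannot occur for extremal curves of type (ii) or (iii), the gonality of $X$ is computed by the ruling, i.e. $\gamma=X\cdot L$, and the series cut on $X$ by $|L|$ is a gonal series — which is exactly the last assertion, the gonal cover being the restriction of $\pi\colon R\to\mP^1$.

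It then remains to evaluate $X\cdot L$ in the two cases. If $X\in|mH+L|$ one gets $\gamma=X\cdot L=m$, hence $m=\gamma$; if $X\in|(m+1)H-(r-\epsilon-2)L|$ one gets $\gamma=X\cdot L=m+1$, hence $m=\gamma-1$. Substituting these values of $m$ (and $\epsilon=0$ in the first case) into the defining relation \eqref{degm} yields the degree formulas \eqref{deggamma0} and \eqref{deggamma1}, while substituting them into the Castelnuovo bound \eqref{maxgen}, which equals $g$ because $X$ is extremal, yields the genus formulas \eqref{maxgengamma0} and \eqref{maxgengamma1}; as a consistency check one may also recompute $d=X\cdot H$ directly from the two linear-system classes and verify agreement with \eqref{degm}. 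The only step that is more than bookkeeping is the appeal to Theorem~\ref{marty} together with the exclusion of its exceptional case, and both are already in place, so I expect no real obstacle here.
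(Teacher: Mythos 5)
Your proposal is correct and follows essentially the same route the paper intends: Theorem \ref{acgh1} for the ruled surface and the class of $X$, Theorem \ref{marty} (with the exceptional case on $\mF_1$ excluded by the preceding Remark) to get $\gamma = X\cdot L$, and then substitution of $m=\gamma$ (with $\epsilon=0$) or $m=\gamma-1$ into \eqref{degm} and \eqref{maxgen}. The intersection computations $X\cdot L=m$ and $X\cdot L=m+1$ and the resulting degree and genus formulas all check out.
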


\begin{rmk} 
\label{bounddeg} 
Assume $\gamma \ge 4$ in Corollary \ref{propc} and consider the case $(ii)$: $m =\gamma -1$.

If $\epsilon = 0$ and $d \ge 3r -1$ then, from (\ref{deggamma1}), we have
$$
d = ( \gamma-1)(r-1) + 1 \ge 3r -1
\quad \Longrightarrow \quad (\gamma-4)r \ge \gamma -3 \quad \Longrightarrow \quad \gamma \ge 5.
$$ 
\end{rmk}

 We shall need the following result (see \cite[Theorem 4.13 ]{LM}).

\begin{thm}
\label{LM1}
Let $X$ be an extremal curve of degree $d \ge 3r - 1$ in $\mP^r$. Then $d_{r -1} = d - 1$ and $X$ does not satisfy all slope inequalities.
\end{thm}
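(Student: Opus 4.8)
The statement to prove is Theorem \ref{LM1}: for an extremal curve $X$ of degree $d \ge 3r-1$ in $\mP^r$, we have $d_{r-1} = d-1$, and consequently $X$ violates some slope inequality. Since this is quoted from \cite[Theorem 4.13]{LM}, the natural route is to reconstruct the argument from the classification of extremal curves. First I would observe that the hyperplane series $|H|$ on $X$ is a $g^r_d$, and that projecting from a general point $p \in X$ gives the residual series $|H - p|$, a $g^{r-1}_{d-1}$ (the projection is still birational and $p$ imposes an independent condition on $H$ because $X$ is non-degenerate). This immediately yields $d_{r-1} \le d-1$. The content is the reverse inequality $d_{r-1} \ge d-1$, i.e. no linear series $g^{r-1}_{d-1}$ or smaller computes $d_{r-1}$ below $d-1$.

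\textbf{Main steps.} The key tool is Castelnuovo theory applied twice. If $d_{r-1} \le d-2$, then $X$ would admit a simple or non-simple $g^{r-1}_{e}$ with $e \le d-2$. One splits into cases. If the series is simple, then $X$ (or its birational image) is a non-degenerate curve of degree $e \le d-2$ in $\mP^{r-1}$, hence has genus at most $\pi(e, r-1) \le \pi(d-2, r-1)$; comparing this with the actual genus $g = \pi(d,r)$ of the extremal curve and using $d \ge 3r-1$ (which, via the formula $m_{d,r} = \lceil \frac{d-1}{r-1}\rceil \ge 3$, makes the Castelnuovo bound grow fast enough) one derives $\pi(d-2,r-1) < \pi(d,r)$, a contradiction. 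The condition $d \ge 3r-1$ is precisely what is needed for this numerical gap to be strict — this mirrors Remark \ref{bounddeg} and the appearance of $3r-1$ throughout. If instead the computing series is non-simple (compounded), then $X$ maps with degree $\ge 2$ onto a curve of small degree in $\mP^{r-1}$, which forces $X$ to have low gonality; but $\gamma \ge 4$ (our standing assumption) together with the structure of extremal curves on a Hirzebruch surface (Corollary \ref{propc}, where $\gamma \in \{m, m+1\}$ and $m = m_{d,r} \ge 3$) rules this out for $d \ge 3r-1$. One may alternatively invoke directly that on the ruled surface $R \supset X$ the only low-degree pencils come from the ruling, which has degree $\gamma$, and no sub-$g^{r-1}_{d-1}$ can factor through it without violating the degree count.

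\textbf{Deriving the slope violation.} Once $d_{r-1} = d-1$ is established, we also have $d_r \le d$ (from $|H|$ itself) — in fact $d_r = d$ follows from a parallel Castelnuovo argument, but even $d_r \le d$ suffices. Then
$$
\frac{d_{r-1}}{r-1} = \frac{d-1}{r-1} \quad \text{and} \quad \frac{d_r}{r} \le \frac{d}{r},
$$
and one checks $\frac{d-1}{r-1} < \frac{d}{r}$, i.e. $r(d-1) < d(r-1) $... wait, this direction needs care: $r(d-1) < (r-1)d \iff rd - r < rd - d \iff d < r$, which is false. So instead the violation occurs at the $(r-1)$-th inequality read the other way, or one uses $d_r = d$ exactly and $d_{r-1}=d-1$: then $\frac{d_{r-1}}{r-1} = \frac{d-1}{r-1} > \frac{d}{r} = \frac{d_r}{r}$ would need $r(d-1) > d(r-1) \iff d > r$, which is true, so actually the $(r-1)$-th slope inequality \emph{does} hold — hence the violation must be located elsewhere in the sequence. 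The correct conclusion, following \cite{LM}, is that the additivity constraints $d_{r+s} \le d_r + d_s$ of Lemma \ref{incre} combined with $d_{r-1} = d-1$, $d_r = d$ force $d_{r+1}$ to be too small relative to $d_r$, yielding $\frac{d_r}{r} < \frac{d_{r+1}}{r+1}$ for a suitable index; one argues that $d_{r+1} \le d_r + d_1 = d + \gamma$ cannot be reconciled with $\frac{d_{r+1}}{r+1} \le \frac{d_r}{r} = \frac{d}{r}$ when $\gamma$ and $d$ satisfy the given bounds, producing a contradiction and hence the violation.

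\textbf{Expected main obstacle.} The delicate point is the lower bound $d_{r-1} \ge d-1$: ruling out \emph{non-simple} (compounded) linear series of degree $\le d-2$ computing $d_{r-1}$. Handling the simple case is a clean Castelnuovo-bound estimate, but the compounded case requires either invoking the classification of curves on Hirzebruch surfaces to control all low-degree pencils on $X$, or a separate genus estimate for covers $X \to X'$ of low-degree curves $X' \subset \mP^{r-1}$ — and making the numerics work uniformly under the single hypothesis $d \ge 3r-1$ (rather than needing $\gamma$-dependent bounds) is where the care is concentrated. Pinning down exactly which slope inequality is violated, and checking the arithmetic that makes $\frac{d_r}{r} < \frac{d_{r+1}}{r+1}$ forced, is the secondary technical burden.
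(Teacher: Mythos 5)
First, note that the paper does not actually prove this statement: it is quoted verbatim from \cite[Theorem 4.13]{LM}, and the relevant mechanism is the one the paper itself redeploys in the proof of Theorem \ref{caso3rm2}. Measured against that, your proposal has two genuine gaps. The first is in the lower bound $d_{r-1}\ge d-1$. You claim that for a simple $g^{r-1}_e$ with $e\le d-2$ the hypothesis $d\ge 3r-1$ makes the gap $\pi(d-2,r-1)<\pi(d,r)$ strict, but it does not: at the boundary $d=3r-1$ (take $r\ge 5$) one has $m_{d,r}=3$, $\epsilon_{d,r}=1$, hence $\pi(d,r)=3r$, while $m_{d-2,r-1}=3$, $\epsilon_{d-2,r-1}=2$ gives $\pi(d-2,r-1)=3r$ as well. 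The Castelnuovo comparison alone therefore yields no contradiction, and the compounded case is only gestured at, so the reverse inequality is not established.

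The second and more serious gap is the location of the violated slope inequality. After correctly noticing that $d_{r-1}=d-1$ and $d_r=d$ make the $(r-1)$-th inequality \emph{hold}, you fall back on trying to derive $\frac{d_r}{r}<\frac{d_{r+1}}{r+1}$ from $d_{r+1}\le d_r+d_1$. This cannot work: the whole point of the present paper (Theorem \ref{mainthm} and Corollary \ref{fasciabianca}) is that the $r$-th slope inequality frequently \emph{does} hold for these curves, precisely because $d_{r+1}\le d+\gamma-1$ is compatible with $\frac{d_r}{r}\ge\frac{d_{r+1}}{r+1}$ whenever $d\ge r(\gamma-1)$; an argument forcing a violation at index $r$ would contradict the paper's main result. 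The violation guaranteed by the theorem sits at the Serre-dual index: one first proves $2d\le g+3r-2$ (Proposition \ref{LM3}), then applies Lemma \ref{LM2} to the hyperplane series $g^r_d$ together with $d_{r-1}=d-1$ to get $d_{s'+1}\ge\delta'+3$ for $\delta'=2g-2-d$ and $s'=g-1-d+r$, whence $\frac{d_{s'}}{s'}<\frac{d_{s'+1}}{s'+1}$. This duality step is the essential idea of the result and is absent from your proposal.
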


Now we can state the main result of this section, which gives a negative answer to the question posed by Kato and Martens in \cite{KM}, i.e. if the $r$-th slope inequality is violated for extremal curves in $\mP^r$ for any $r \ge 4$.

\begin{thm} \label{mainthm} 
Let $X$ be a $\gamma$-gonal extremal curve of degree $d \ge 3r - 1$ in $\mP^r$ ($r \ge 3$) where $\gamma \ge 4$. Then $d_r=d$.

If, in addition, $X$ is not isomorphic to a plane curve,
then $X$ satisfies the following conditions:
\begin{itemize}
\item[$i)$]
 $d_{r+1}\le d+\gamma -1$;
\item[$ii)$]
by assuming one of the following sets of hypotheses:
\begin{itemize}
 \item[$(a)$] either $\epsilon =0$, $m = \gamma$ and $r \ge \gamma -1$,
 \item[$(b)$] or $\epsilon \ge \gamma -2$, 
\end{itemize}
\noindent
then the $r$-th slope inequality holds, i.e.
$$
\frac {d_r} r \ge \frac {d_{r+1} } {r+1}.
$$ 
\end{itemize}
\end{thm}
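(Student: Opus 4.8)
The plan is to treat the three assertions in turn. For the equality $d_r=d$: since $X\subset\mP^r$ is extremal of degree $d\ge 3r-1$, Theorem~\ref{LM1} gives $d_{r-1}=d-1$, and because the gonality sequence is strictly increasing (Lemma~\ref{incre}) this forces $d_r\ge d_{r-1}+1=d$; conversely the hyperplane bundle of $X\subset\mP^r$ has degree $d$ and at least $r+1$ global sections, so $d_r\le d$. Hence $d_r=d$ — this step uses only the stated hypotheses, and in particular it covers the plane-curve case as well.

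Assume now that $X$ is not a plane curve, so by Theorem~\ref{acgh1} and Corollary~\ref{propc} it lies on a rational normal ruled surface $R\cong\mF_n$ with hyperplane class $H\sim C_0+\beta L$, $\beta=(r+n-1)/2$ (Remark~\ref{fritto}), and with $L\cdot X=\gamma$ and $H\cdot X=d$. To bound $d_{r+1}$ I would work with $\sO_R(H+L)$: by the formula in \eqref{stellina}, applied to $C_0+(\beta+1)L$, one has $h^0(\sO_R(H+L))=2(\beta+1)+2-n=r+3$. In the restriction sequence
$$
0\to\sO_R(H+L-X)\to\sO_R(H+L)\to\sO_X((H+L)|_X)\to 0
$$
the class $H+L-X$ equals $(1-\gamma)H$ when $X\in|mH+L|$ and $(1-\gamma)H+(r-\epsilon-1)L$ when $X\in|(m+1)H-(r-\epsilon-2)L|$; in either case its $C_0$-coefficient is $1-\gamma<0$, so $\sO_R(H+L-X)$ has no global section (restrict to a fibre of $\mF_n$). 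Hence $h^0(\sO_X((H+L)|_X))\ge r+3$; subtracting a point $p\in X$ drops $h^0$ by at most $1$, so $h^0(\sO_X((H+L)|_X-p))\ge r+2$, and since $\deg((H+L)|_X-p)=H\cdot X+L\cdot X-1=d+\gamma-1$ this exhibits a $g^{r+1}_{d+\gamma-1}$ on $X$, that is $d_{r+1}\le d+\gamma-1$.

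Finally, plugging $d_r=d$ and $d_{r+1}\le d+\gamma-1$ into $\frac{d_r}{r}\ge\frac{d_{r+1}}{r+1}$, equivalently $(r+1)d\ge r\,d_{r+1}$, it suffices that $(r+1)d\ge r(d+\gamma-1)$, i.e.\ $d\ge r(\gamma-1)$. Under $(a)$ we are in the case $X\in|mH+L|$, so $d=\gamma(r-1)+1$ by \eqref{deggamma0} and $d\ge r(\gamma-1)$ is precisely $r\ge\gamma-1$, which is assumed; under $(b)$, since $\gamma\ge4$ forces $\epsilon\ge2>0$ we are in the case $X\in|(m+1)H-(r-\epsilon-2)L|$, so $d=(\gamma-1)(r-1)+\epsilon+1$ by \eqref{deggamma1} and $d\ge r(\gamma-1)$ is precisely $\epsilon\ge\gamma-2$, the hypothesis. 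The genuinely delicate point in this plan is the vanishing $h^0(\sO_R(H+L-X))=0$: it is what makes one use the classification of extremal curves to pin down the divisor class of $X$ on $R\cong\mF_n$ in the two possible cases, after which the vanishing is immediate since a divisor on a Hirzebruch surface with negative $C_0$-coefficient has no global sections. Everything else is the dimension formula \eqref{stellina} and elementary arithmetic with \eqref{deggamma0} and \eqref{deggamma1}.
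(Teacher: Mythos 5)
Your proposal is correct, and for part $(i)$ it takes a genuinely different route from the paper, even though both arguments ultimately estimate $h^0$ of the same divisor class, namely $(H+L)|_X - P$ (the paper writes it as $D = H_X + Q_1 + \cdots + Q_{\gamma-1}$ with $\Gamma_P = P + Q_1 + \cdots + Q_{\gamma-1}$ the gonal divisor through a point $P$ of a general hyperplane section). The paper works intrinsically on the canonical model $X_K \subset \mP^{g-1}$: by the Geometric Riemann--Roch Theorem, $\dim\langle \Gamma_P\rangle = \gamma-2$ and $\dim\langle H_X\rangle = d-r-1$, and since the two spans share the point $P$, the span of $D$ has dimension at most $d-r+\gamma-3$, whence $h^0(\sO(D)) \ge r+2$. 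You instead run the restriction sequence $0\to\sO_R(H+L-X)\to\sO_R(H+L)\to\sO_X((H+L)|_X)\to 0$ on the scroll, using the classification of Theorem \ref{acgh1} to see that $H+L-X$ has negative $C_0$-coefficient and hence no sections, so $h^0(\sO_X((H+L)|_X))\ge h^0(\sO_R(H+L)) = r+3$ and subtracting a point gives the required $g^t_{d+\gamma-1}$ with $t\ge r+1$. Your computation checks out in both cases $X\in|mH+L|$ and $X\in|(m+1)H-(r-\epsilon-2)L|$, and it even yields the marginally stronger byproduct $d_{r+2}\le d+\gamma$; the paper's span argument is softer in that it only uses that the gonal pencil is cut by the ruling and that a general hyperplane meets each fibre in one point of $X$, without invoking the explicit divisor class of $X$ on $R$. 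The first claim ($d_r=d$ via Theorem \ref{LM1} and monotonicity) and part $(ii)$ (reducing the slope inequality to $d\ge r(\gamma-1)$ and verifying it from \eqref{deggamma0} and \eqref{deggamma1} under $(a)$ and $(b)$ respectively) coincide with the paper's argument.
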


\begin{proof}
From Theorem \ref{LM1}, we obtain
$d_{r-1}= d-1$. Since $d_r > d_{r-1}= d-1$, we get that $d_r \ge d$. On the other hand $X$ possesses a $g^r_d$ by
assumption, so $d_r \le d$, hence the first statement is proved.

\medskip
\noindent
$(i)$ Since $X \subset \mP^r$ is a curve of degree $d$, denoting by $|H|$ the hyperplane linear system in 
$\mP^r$, the induced linear system $|H_X|$ on $X$ is a linear series $g^r_d$.

From Corollary \ref{propc}, 
 $X$ is a $\gamma$--secant curve on a rational ruled surface $R$ and the gonal series $g^1_\gamma$ on $X$ comes from the restriction of $\pi: R \rightarrow \mP^1$. So, for any $P \in X$, we set 
$\Gamma_P$ to be the gonal divisor contining $P$, i.e. $\Gamma_P = \pi^{-1}(\pi(P)) \cap X$.

Also observe that the general hyperplane $H$ cuts on $R$ an irreducible unisecant curve, say $U_H$. In particular, the general hyperplane $H$ does not contain any line of the ruling. Therefore for a general $H_X \in g^r_d$ and for any $P \in H_X$, we have
$$
H_X \cap \Gamma_P = H \cap X \cap \Gamma_P = (H \cap R)\cap X \cap \Gamma_P = U_H \cap X \cap \Gamma_P= \{P\}.
$$
Setting $\Gamma_P = P + Q_1 + \cdots+ Q_{\gamma-1}$, let us consider the divisor obtained by adding to $H_X$ the $\gamma -1$ further points of the gonal divisor, i.e. $D = H_X + Q_1 + \cdots+ Q_{\gamma-1}$;
we have
$$
\deg D = \deg H_X +\gamma -1 = d+ \gamma -1.
$$
Now let us consider the canonical model $X_K \subset \mP^{g-1}$; here we can apply the Geometric Riemann-Roch Theorem to all the divisors $\Gamma_P$, $H_X$ and $D$. First we obtain
$$
\dim \langle \Gamma_P \rangle = \deg \Gamma_P - h^0({\mathcal O}(\Gamma_P)) = \gamma-2 
$$
and
$$
\dim \langle H_X \rangle = \deg H_X - h^0({\mathcal O}(H_X)) = d-r-1.
$$
Consequently, since the intersection $ \langle \Gamma_P \rangle \cap \langle H_X \rangle $ contains $P$, we have
$$
\dim \langle D \rangle \le \dim \langle H_X \rangle +\gamma -2 = d-r+\gamma -3.
$$
Hence, again from the Geometric Riemann-Roch Theorem, we get that
$$
t+1 := h^0({\mathcal O}(D)) = \deg D - \dim \langle D \rangle \ge d+\gamma -1 -(d-r+\gamma -3) = r+2.
$$
Therefore there exists an integer $t \ge r+1$ such that $|D|$ is of the form $g^t_{d+\gamma -1}$. 
This implies that $d_t \le d+\gamma -1$. From 
Lemma \ref{incre}, we finally obtain that $d_{r+1} \le d_t \le d+\gamma -1$.

\medskip
\noindent
$(ii)$ From the fact that $d_r=d$ and $(i)$, it is enough to show that
$$
\frac {d} r \ge \frac {d+\gamma -1} {r+1},
$$ 
or, equivalently, 
\begin{equation}
\label{tesina}
d \ge r(\gamma -1).
\end{equation}
$(a)$ Assume $\epsilon =0$, $m = \gamma$ and $r +1 \ge \gamma$. Then, using (\ref {deggamma0}), 
$$
d= \gamma (r-1) +1 = r \gamma -\gamma +1 \ge r \gamma -(r+1) +1 = r(\gamma -1)
$$
i.e. (\ref{tesina}), as required.

\noindent
$(b)$
Assume $\epsilon \ge \gamma -2 \ge 1$. Then we express the degree using (\ref {deggamma1}), obtaining
$$
d= (\gamma -1)(r-1)+\epsilon +1 \ge (\gamma -1)(r-1)+\gamma -1 = r(\gamma -1)
$$
so we get (\ref{tesina}), as required.
 \end{proof}
 
 Let us observe that the condition that $X$ is not isomorphic to a plane curve is necessary since, otherwise,  the inequality $(i)$ does not hold (see  
 Remark \ref{remarcob}) and, even assuming $(b)$, the $r$-th slope inequality fails.

\begin{cor}
\label{fasciabianca}

Let $X$ be a $\gamma$-gonal extremal curve  in $\mP^r$, where $\gamma \ge 4$.
If the degree $d$ satisfies
\begin{equation}
\label{tradue}
r(\gamma-1) \le d \le \gamma(r-1)+1 
\end{equation}
then the $r$-th slope inequality holds. 
\end{cor}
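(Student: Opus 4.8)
The plan is to derive Corollary \ref{fasciabianca} directly from the Main Theorem (Theorem \ref{mainthm}) by a short case analysis, using the fact that the degree hypothesis \eqref{tradue} forces the curve into one of the two structural cases of Corollary \ref{propc} and simultaneously guarantees the technical hypothesis needed in part $(ii)$ of Theorem \ref{mainthm}. First I would observe that the lower bound $d \ge r(\gamma-1)$ together with the Castelnuovo-type constraints means $d \ge 3r-1$ is essentially automatic once $\gamma \ge 4$ (indeed $r(\gamma-1) \ge 3r$ when $\gamma \ge 4$), so Theorem \ref{mainthm} applies and gives $d_r = d$; moreover, if $X$ were isomorphic to a plane curve we would be in case (i) of Theorem \ref{acgh1}, forcing $r=5$ and $d=2k$, and one checks the degree range \eqref{tradue} is incompatible with the Max Noether values (or can be excluded by the explicit plane-curve gonality formula recalled in Remark \ref{remarcob}); hence $X$ is not a plane curve and Corollary \ref{propc} applies.

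Next I would split according to the two possibilities in Corollary \ref{propc}. In case $(i)$ of that corollary we have $\epsilon = 0$, $m = \gamma$ and $d = \gamma(r-1)+1$; the upper bound in \eqref{tradue} is then an equality, and I must check that the remaining hypothesis of Theorem \ref{mainthm}$(ii)(a)$, namely $r \ge \gamma-1$, is forced. This is precisely where the lower bound $d \ge r(\gamma-1)$ enters: substituting $d = \gamma(r-1)+1$ gives $\gamma(r-1)+1 \ge r(\gamma-1)$, i.e. $r \ge \gamma - 1$, exactly as needed. In case $(ii)$ of Corollary \ref{propc} we have $m = \gamma-1$ and $d = (\gamma-1)(r-1)+\epsilon+1$ with $0 \le \epsilon \le r-2$; the lower bound $d \ge r(\gamma-1)$ becomes $(\gamma-1)(r-1)+\epsilon+1 \ge r(\gamma-1)$, i.e. $\epsilon \ge \gamma-2$, which is exactly hypothesis $(b)$ of Theorem \ref{mainthm}$(ii)$. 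In either case Theorem \ref{mainthm}$(ii)$ yields the $r$-th slope inequality.

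So the structure of the argument is: (1) check $d \ge 3r-1$ and rule out the plane-curve case so that Theorem \ref{mainthm} and Corollary \ref{propc} both apply; (2) in the $|mH+L|$ case, read off $d = \gamma(r-1)+1$ and use the lower bound on $d$ to get $r \ge \gamma-1$, then invoke Theorem \ref{mainthm}$(ii)(a)$; (3) in the $|(m+1)H-(r-\epsilon-2)L|$ case, use the lower bound on $d$ to get $\epsilon \ge \gamma-2$, then invoke Theorem \ref{mainthm}$(ii)(b)$. The main obstacle — really the only nontrivial point — is verifying that the hypothesis $d \ge 3r-1$ of the Main Theorem is genuinely implied by \eqref{tradue} for all $r \ge 3$ and $\gamma \ge 4$ (the borderline case $\gamma = 4$ gives $d \ge 3r$, which is fine), and handling cleanly the degenerate extremal curves excluded by the theorem, i.e. confirming that the range \eqref{tradue} is disjoint from the plane-curve locus $r=5,\ d=2k$ with $k\ge 6$. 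Both checks are elementary inequality manipulations, so no serious difficulty is expected; the content of the corollary is entirely in the bookkeeping that the two endpoints of the interval \eqref{tradue} match the two structural cases and their respective technical hypotheses.
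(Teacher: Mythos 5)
Your proposal is correct and follows essentially the same route as the paper's own proof: exclude the plane-curve case via Theorem \ref{acgh1}(i) and Max Noether, then split into the two cases of Corollary \ref{propc}, reading off $r \ge \gamma-1$ (hypothesis $(a)$) in the $|mH+L|$ case and $\epsilon \ge \gamma-2$ (hypothesis $(b)$) in the other case from the bounds \eqref{tradue}, and conclude by Theorem \ref{mainthm}$(ii)$. Your explicit verification that $d \ge r(\gamma-1) \ge 3r > 3r-1$ for $\gamma \ge 4$ is a small point the paper leaves implicit, but it is the same argument.
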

\begin{proof}
Let us note first that, under the assumption \eqref{tradue}, the curve $X$ cannot be isomorphic to a plane curve. If so, by Theorem \ref{acgh1}, (i), we would have $r=5$, $d=2k$ and $\gamma =k-1$ by Max Noether Theorem, contradicting \eqref{tradue}.

Now observe that the assumption implies
$$
r(\gamma-1) \le \gamma(r-1)+1
\quad \Rightarrow \quad 
r \ge \gamma -1.
$$
Since $X$ is an extremal curve and $X$ is not isomorphic to a plane curve, by Corollary \ref{propc}, we have two possible cases.

In the first one, $m=\gamma$, $\epsilon =0$ and
$d= \gamma (r-1) +1$ from (\ref{deggamma0}). Since $r \ge \gamma - 1$, by Theorem \ref{mainthm} we obtain that the $r$-th slope inequality holds.

In the second case, $m=\gamma -1$ and $d= (\gamma -1)(r-1)+\epsilon +1$ from (\ref{deggamma1}). The assumption (\ref{tradue}) yields
$$
 (\gamma -1)(r-1)+\epsilon +1 \ge r(\gamma-1)
 \quad \Rightarrow \quad 
 \epsilon \ge \gamma -2.
 $$
Again by Theorem \ref{mainthm}, we obtain that the $r$-th slope inequality holds. 
\end{proof}

The study above and, in particular, Corollary \ref{fasciabianca}, can be summarized in the following table where we shall consider only curves which are not isomorphic to plane curves.

The first column concerns the increasing degree and the last one the $r$-th slope inequality.

In particular, the case $d=2r$, here omitted, has been described in 
Remark \ref{remarcoa}.

The first four lines of the table will be considered in detail in the next section.

  \medskip

\begin{center}
 \begin{tabular}{|c c c c c |} 
 \hline
 $d$ & $\quad \gamma \quad$ & $\quad m \quad$ & $\quad \epsilon \quad$ & \quad $r$-th slope \\ [0.8ex] 
 \hline
 \hline
 $2r+1 \le d \le 3r-3$ \qquad & 3  & 2 & $2 \le \epsilon\le r-2$ & \quad yes (trigonal)  \\ 
 \hline
 $3r-2$  & 3 & 3 & 0 & \quad yes (trigonal)  \\
 \hline
 $3r-2$  & 4 & 3 & 0 & \quad $\star$ \quad   \\
 \hline
 $3r-1$  & 4 & 3 & 1 & \quad no\quad   \\
 \hline
 $3r \le d \le 4r-4$ \qquad &  4  & 3 & $2 \le \epsilon\le r-2$ & \quad yes   \\ 
 \hline
 $4r-3$  & 4 & 4 & 0 & \quad yes   \\
 \hline
 $4r-3$  & 5 & 4 & 0 & \quad   \\
 \hline
 $4r-2$  & 5 & 4 & 1 & \quad   \\
 \hline
$4r-1$  & 5 & 4 & 2 & \quad   \\
 \hline
$4r \le d \le 5r-5$ \qquad &  5  & 4 & $2 \le \epsilon\le r-2$ & \quad yes   \\ 
 \hline
 $5r-4$  & 5 & 5 & 0 & \quad yes   \\
 \hline
$5r-4$  & 6 & 5 & 0 & \quad   \\
 \hline
 $5r-3$  & 6 & 5 & 1 & \quad   \\
 \hline
$5r-2$  & 6 & 5 & 2 & \quad   \\
 \hline
$5r-1$  & 6 & 5 & 3 & \quad   \\
 \hline
 $5r \le d \le 6r-6$ \qquad &  6  & 5 & $2 \le \epsilon\le r-2$ & \quad yes   \\ 
 \hline
 $6r-5$  & 6 & 6 & 0 & \quad yes   \\
 \hline
 ... & ... & ... & ... & ...  \\
 \hline
\end{tabular}
\end{center} 
 \medskip
 \centerline{Table 1}


 \section {Violating cases}

This section concerns the first four lines of the Table 1. More precisely, extremal curves of degree in the initial range are forced to have certain gonality (see Proposition \ref{nostra}). Moreover, the $r$-th slope inequality is violated by extremal curves of degree $d=3r-1$ (see Proposition \ref{altrui}). Finally, we will treat the case of curves of degree $d=3r-2$ in Proposition \ref{casor4} and Theorem \ref{caso3rm2}.

\begin{prop}
\label{nostra}
Let $X \subset \mP^r$ (where $r \ge 3$) be an extremal curve of degree $d$ and genus $g$, not isomorphic to a plane curve and such that 
$$
2r+1 \le d \le 3r -1.
$$
Then:
\item{i)} if  $2r+1 \le  d \le 3r -3$ then  $X$ is   trigonal ($m=2$, $\epsilon \ge 2$);
\item{ii)} if  $d = 3r -2$ then   $X$  is  either trigonal or   fourgonal (where $m=3$, $\epsilon =0$);
\item{iii)} if  $d = 3r -1$ then  $X$  is  fourgonal (where $m=3$, $\epsilon =1$).
\end{prop}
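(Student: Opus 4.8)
The plan is to reduce the statement to an elementary computation of the invariants $m_{d,r}$ and $\epsilon_{d,r}$, on top of the structural dichotomy already packaged in Corollary \ref{propc}. Since $X$ is extremal and, by hypothesis, not isomorphic to a plane curve, Corollary \ref{propc} yields a rational normal ruled surface $R$ with $X \subset R \subset \mP^r$, $\Pic(R) = \mZ[H]\oplus\mZ[L]$, and one of two cases: (A) $X \in |mH+L|$, which forces $\epsilon_{d,r}=0$ and $\gamma = m_{d,r}$; or (B) $X \in |(m+1)H-(r-\epsilon-2)L|$, in which case $\gamma = m_{d,r}+1$. Both identifications of $\gamma$ rest ultimately on Martens' Theorem \ref{marty}, the exceptional configuration $\alpha(C_0+L)$ on $\mF_1$ being excluded for extremal curves of type (ii), (iii) by the Remark following it. Thus it is enough to compute $m_{d,r}$ and $\epsilon_{d,r}$ in each of the three degree ranges and to read off which of (A), (B) is then admissible.

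First I would carry out the arithmetic, using $m_{d,r}=[(d-1)/(r-1)]$ and $d-1 = m_{d,r}(r-1)+\epsilon_{d,r}$. If $2r+1\le d\le 3r-3$, then necessarily $r\ge 4$ (the interval being empty when $r=3$), and $d-1 = 2(r-1)+(d-2r+1)$ with $2\le d-2r+1\le r-2$; hence $m_{d,r}=2$ and $2\le\epsilon_{d,r}\le r-2$. If $d=3r-2$, then $d-1=3(r-1)$, so $m_{d,r}=3$ and $\epsilon_{d,r}=0$. If $d=3r-1$, then $d-1=3(r-1)+1$, so $m_{d,r}=3$ and $\epsilon_{d,r}=1$.

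Finally I would feed these values into the dichotomy. In the range $2r+1\le d\le 3r-3$ we have $\epsilon_{d,r}\ge 2\neq 0$, so (A) cannot occur; thus (B) holds and $\gamma=m_{d,r}+1=3$, i.e. $X$ is trigonal (with $m=2$, $\epsilon\ge 2$), which is (i). For $d=3r-1$ we again have $\epsilon_{d,r}=1\neq 0$, so (B) holds and $\gamma=m_{d,r}+1=4$, i.e. $X$ is fourgonal with $m=3$, $\epsilon=1$, which is (iii). For $d=3r-2$ we have $\epsilon_{d,r}=0$, so both (A) and (B) are a priori possible: (A) gives $\gamma=m_{d,r}=3$ (trigonal), while (B) gives $\gamma=m_{d,r}+1=4$ (fourgonal, with $m=3$, $\epsilon=0$); hence $X$ is trigonal or fourgonal, which is (ii).

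The argument is essentially bookkeeping once Corollary \ref{propc} is available, so there is no deep obstacle; the only points demanding some care are checking that the interval in (i) is empty precisely for $r=3$, so that $[(d-1)/(r-1)]$ is uniformly equal to $2$ over that range, and keeping track of the fact that the gonality really equals $X\cdot L$ in both cases of Corollary \ref{propc} — i.e. that the exceptional case of Martens' Theorem \ref{marty} does not intervene here, which is exactly what the Remark after that theorem ensures.
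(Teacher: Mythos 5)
Your proposal is correct and follows essentially the same route as the paper: compute $m_{d,r}$ and $\epsilon_{d,r}$ in each degree range, then invoke the dichotomy of Corollary \ref{propc} (with $\epsilon\neq 0$ excluding the case $X\in|mH+L|$) to read off $\gamma=m$ or $\gamma=m+1$. The only differences are cosmetic — you obtain $m=2$ by an explicit division with remainder rather than the bound $\tfrac{d-1}{r-1}<3$, and you add the (correct, if inessential) observation that the range in (i) is empty for $r=3$.
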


\begin{proof}
Let us recall that $d = m(r-1) + \epsilon +1$ where $m =[(d-1) / (r-1)]$.

\noindent
$(i)$ In this case
$$
 \frac{d-1}{r-1} \le  \frac {3r-4}{r-1} <3
$$
hence $m=2$. So $d = 2(r-1) + \epsilon +1$ and the bound $d \ge 2r +1$ implies $\epsilon \ge 2$. Therefore we are in the case of Corollary \ref{propc} - (ii), hence $X$ admits a $g^1_3$, i.e. it is a trigonal curve.

\noindent
$(ii)$ In this case 
$$
 \frac{d-1}{r-1} =  \frac {3r-3}{r-1} =3
$$
hence $m=3$. Therefore $d = 3(r-1) + \epsilon +1 = 3r-2$ and, so, $\epsilon =0$. By Corollary \ref{propc}  there are two possibilities: 
in case $(i)$ we have $\gamma = m =3$ so the curve $X$ is trigonal. 

\noindent
Otherwise, in case $(ii)$ of Corollary \ref{propc}, we have that $\gamma = m+1 =4$ so $X$ is fourgonal.

\noindent
$(iii)$ In this case 
$$
 \frac{d-1}{r-1} =  \frac {3r-2}{r-1} 
$$
hence $m=3$. Therefore $d = 3(r-1) + \epsilon +1 = 3r-1$ and, so, $\epsilon =1$. So the situation is described by 
Corollary \ref{propc} - (ii) and, in particular, $X$ possesses a $g^1_4$.
\end{proof}


Concerning the slope inequalities, the cases described above behave as follows: as far as $X$ is trigonal, all the slope inequalities are fulfilled (see Remark \ref{rem2}). The case $d=3r-1$ is described by Lange--Martens (see \cite[Corollary 4.6]{LM}) as follows.

\begin{prop}
\label{altrui}
For any $r \ge 2$ and any extremal curve $X$ of degree
$d=3r-1$ in $\mP^r$, we have
$$
\frac {d_r} r < \frac {d_{r+1} } {r+1}.
$$ 
\end{prop}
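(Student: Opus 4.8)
The plan is to fix the values of $d_{r-1}$, $d_r$ and the genus $g$ exactly, then to rule out by Serre duality the only two a priori admissible ``small'' values of $d_{r+1}$, after which the strict inequality falls out of a one-line arithmetic estimate.

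First I would record the numerics. Since $d=3r-1$, we have $d-1=3(r-1)+1$, so by \eqref{degm} the $m$-ratio is $m=3$ and the $\epsilon$-remainder is $\epsilon=1$; hence by \eqref{maxgen}
\[
g=\pi(3r-1,r)=3\bigl((r-1)+1\bigr)=3r .
\]
Because $d=3r-1\ge 3r-1$, Theorem \ref{LM1} applies and gives $d_{r-1}=d-1=3r-2$. By strict monotonicity of the gonality sequence (Lemma \ref{incre}), $d_r\ge d_{r-1}+1=3r-1$; on the other hand the nondegenerate embedding $X\subset\mP^r$ of degree $d$ provides a line bundle of degree $d$ with at least $r+1$ sections, so $d_r\le d=3r-1$. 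Hence $d_r=3r-1$. (If $X$ is the Veronese image of a smooth plane curve, i.e.\ case (i) of Theorem \ref{acgh1}, then $r=5$, $d=2k=14$, $k=7$, and the same reasoning gives $d_4=13$, $d_5=14$; alternatively this case is already contained in Remark \ref{remarcob}, where $\alpha=2$, $\beta=0\le k-4$.)

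The heart of the matter is the claim $d_{r+1}\ge 3r+2$. Let $M$ be a line bundle on $X$ with $\deg M=d_{r+1}$ and $h^0(M)\ge r+2$. Since $\deg K_X=2g-2=6r-2$, Riemann--Roch together with Serre duality yields
\[
h^0(K_X-M)=h^1(M)=h^0(M)-\deg M+g-1\ge (r+2)-d_{r+1}+(3r-1).
\]
If $d_{r+1}=3r$, this forces $h^0(K_X-M)\ge r+1$, so $K_X-M$ is a line bundle of degree $(6r-2)-3r=3r-2$ with at least $r+1$ sections; hence $d_r\le 3r-2$, contradicting $d_r=3r-1$. If $d_{r+1}=3r+1$, the same estimate gives $h^0(K_X-M)\ge r$, so $K_X-M$ has degree $3r-3$ and at least $r$ sections, whence $d_{r-1}\le 3r-3$, contradicting $d_{r-1}=3r-2$. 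Since moreover $d_{r+1}>d_r=3r-1$ by monotonicity, the two exclusions leave $d_{r+1}\ge 3r+2$. (When $X$ is not a plane curve, Theorem \ref{mainthm}\,(i) supplies the opposite bound $d_{r+1}\le d+\gamma-1=3r+2$, so in fact $d_{r+1}=3r+2$, though only the lower bound is needed here.)

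With $d_r=3r-1$ and $d_{r+1}\ge 3r+2$ in hand, I would conclude at once:
\[
\frac{d_{r+1}}{r+1}\ge\frac{3r+2}{r+1}>\frac{3r-1}{r}=\frac{d_r}{r},
\]
the middle strict inequality since $(3r+2)r-(3r-1)(r+1)=1>0$. There is no serious obstacle in this plan: everything hinges on knowing $d_{r-1}=3r-2$ and $d_r=3r-1$ exactly, which is precisely what Theorem \ref{LM1} (plus the trivial $d_r\le d$) delivers. The only delicate points are that $M$ must be taken of degree exactly $d_{r+1}$ with $h^0(M)\ge r+2$, so that the bound on $h^1(M)$ is sharp enough to beat the known values $d_r$ and $d_{r-1}$; and that one should explicitly note that the Veronese--plane case of Theorem \ref{acgh1}\,(i) is covered too, which it is, by the same argument (or by Remark \ref{remarcob}).
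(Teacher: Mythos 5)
Your proof is correct, but note that the paper does not actually prove Proposition \ref{altrui}: it is quoted verbatim from \cite[Corollary 4.6]{LM}, so there is no in-text argument to compare against. Your route is essentially the residual-series mechanism that underlies \cite[Lemma 4.8]{LM} (recorded in the paper as Lemma \ref{LM2}): with $\delta=3r-1$ and $s=r$ one has $d_{s-1}=\delta-1$ by Theorem \ref{LM1} and $2\delta=6r-2=g+3s-2$, and the residual series is numerically self-dual ($\delta'=\delta$, $s'=s=r$), so that lemma already yields $d_{r+1}\ge\delta'+3=3r+2$. Your Serre-duality computation is an explicit unwinding of exactly that step: excluding $d_{r+1}\in\{3r,3r+1\}$ by producing residual linear series that would contradict $d_r=3r-1$ and $d_{r-1}=3r-2$. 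What your version buys is self-containedness (no appeal to Lemma \ref{LM2} as a black box) and, combined with Theorem \ref{mainthm}(i), the exact value $d_{r+1}=3r+2$ off the plane-curve case. One small slip: for $r=2$ the $\epsilon$-remainder must satisfy $0\le\epsilon\le r-2=0$, so there $d=5$, $m=4$, $\epsilon=0$ and $X$ is a smooth plane quintic; the genus still comes out to $6=3r$ and your duality argument (with $d_1=4$, $d_2=5$) goes through verbatim, but the numerology $m=3$, $\epsilon=1$ is only valid for $r\ge3$.
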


Finally, the case $d=3r-2$ is studied in the next Theorem \ref{caso3rm2}. Its proof will involve the following  two results (see, respectively, \cite[Proposition 4.10 and Lemma 4.8]{LM}).

\begin{prop}
\label{LM3}
Let $g_\delta^s$ be a very ample linear series on $X$ with $\delta \ge 3s - 1$ and $\epsilon_{\delta,s} \neq 0$. If
$g > \pi(\delta, s) - m+ 2$, then $ 2\delta \le g + 3s - 2$.
\end{prop}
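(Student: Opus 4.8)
# Proof Proposal for Proposition \ref{LM3}

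The plan is to exploit the Castelnuovo-type machinery attached to a very ample linear series together with the residual series, in the same spirit as the proof of part $(i)$ of Theorem \ref{mainthm}. Let $g^s_\delta$ be the given very ample series, giving an embedding $X \hookrightarrow \mP^s$ as a curve of degree $\delta$. Since $\delta \ge 3s-1 \ge 2s+1$, the $m$-ratio $m = m_{\delta,s}$ and $\epsilon$-remainder $\epsilon = \epsilon_{\delta,s}$ are defined by $\delta - 1 = m(s-1) + \epsilon$, and by hypothesis $\epsilon \neq 0$. The key object is the \emph{residual series} $|K_X - H_X|$, where $H_X$ is a general hyperplane section of this embedding; its degree is $2g-2-\delta$ and, by Riemann--Roch, $h^0(K_X - H_X) = g - \delta + s$ (using that a very ample $g^s_\delta$ with $\delta \ge 2s+1$ is special, which follows from $g = \pi(\delta,s) > \delta - s$ under the genus hypothesis). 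So the residual series is a $g^{g-\delta+s-1}_{2g-2-\delta}$.

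The heart of the argument will be a careful application of the Geometric Riemann--Roch Theorem on the canonical model $X_K \subset \mP^{g-1}$, combined with the fact that the hyperplane section $H_X$ is contained in the rational normal scroll swept out by the Castelnuovo theory for an \emph{extremal-type} configuration. More precisely: because $g$ is close to $\pi(\delta,s)$ (the gap being $< m - 2$), the image $X_H \subset \mP^s$ behaves almost like an extremal curve, so its general hyperplane section imposes only $\delta - s$ conditions but the \emph{rows} of its difference-variety structure force the points of $H_X$ to lie in special position in $\mP^{g-1}$. I would estimate $\dim \langle H_X \rangle$ in $\mP^{g-1}$ from above: the Geometric Riemann--Roch gives $\dim\langle H_X\rangle = \delta - h^0(\sO(H_X)) = \delta - s - 1$. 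Then, writing $2\delta \le g + 3s - 2$ as $2\delta - 2s \le g + s - 2$, i.e. $\deg$ of the residual series plus something, I reinterpret the target inequality as a bound asserting that the residual linear series cannot be \emph{too} large — equivalently, that $h^0(K_X - 2H_X) \ge$ (an appropriate value). Chasing through Riemann--Roch, $2\delta \le g + 3s - 2$ is equivalent to $h^0(K_X - 2H_X) \ge g - 2\delta + 2s - 1 \ge 0$ being consistent with a dimension count, so the statement reduces to showing the divisor $2H_X$ still moves enough inside $|K_X|$.

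The cleanest route is probably the following: apply Castelnuovo's bound (\ref{casbo}) not to $X$ in $\mP^s$ but to the image of $X$ under $|2H_X| = \varphi_{2H_X}$, which maps $X$ to $\mP^{h^0(2H_X)-1}$ with degree $2\delta$; by the base-point-free pencil trick or by the standard multiplication-map estimate for very ample series (this is where the hypothesis $\epsilon \neq 0$ and $\delta \ge 3s-1$ enter, guaranteeing $h^0(2H_X) = 3s + \epsilon - m + 1$ or a comparable exact value via the projective normality / Castelnuovo analysis of near-extremal curves). Then the inequality $g(X) \le \pi(2\delta, h^0(2H_X)-1)$ must be compared with the assumed lower bound $g > \pi(\delta,s) - m + 2$; unwinding both sides of this comparison yields exactly $2\delta \le g + 3s - 2$ after elementary manipulation of the $\pi$-function. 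The main obstacle I anticipate is pinning down $h^0(2H_X)$ precisely (or a sharp enough bound): for a genuinely extremal curve this is classical Castelnuovo theory, but here $X$ is only \emph{nearly} extremal, so one needs the quantitative statement that dropping the genus by fewer than $m-2$ from the Castelnuovo bound does not change the Hilbert function of the embedding in the relevant degree — this is the technical crux, and it is presumably where the hypothesis $g > \pi(\delta,s) - m + 2$ is used in an essential way rather than just $\epsilon \neq 0$.
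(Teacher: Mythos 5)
First, note that the paper does not actually prove this statement: it is quoted from Lange--Martens \cite[Proposition 4.10]{LM} and used as a black box, so there is no internal proof to compare against. Judged on its own terms, your proposal is not a proof but a program, and the program points in the wrong direction. The conclusion $2\delta\le g+3s-2$ is a \emph{lower} bound on $g$ (namely $g\ge 2\delta-3s+2$), and the hypothesis already supplies a lower bound on $g$; the only thing to check is that the one implies the other. Your ``cleanest route'' instead applies Castelnuovo's bound to $\varphi_{2H_X}(X)$, which produces an \emph{upper} bound $g\le\pi\bigl(2\delta,h^0(2H_X)-1\bigr)$; combining an upper bound on $g$ with the hypothesis can at best constrain $h^0(2H_X)$ and cannot yield the desired inequality. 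The auxiliary steps (the residual series, the position of $H_X$ in the canonical model, the exact value of $h^0(2H_X)$ for ``near-extremal'' curves) are never carried out, and you explicitly leave the ``technical crux'' unresolved, so there is no proof here even modulo the directional problem.

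In fact no geometry is needed: the statement is pure arithmetic. Write $\delta-1=m(s-1)+\epsilon$ with $1\le\epsilon\le s-2$; the bound $\delta\ge 3s-1$ forces $m\ge 3$. Since $g$ and $\pi(\delta,s)-m+2$ are integers, the hypothesis gives $g\ge\pi(\delta,s)-m+3$, and since $\pi(\delta,s)=\tfrac{m(m-1)}{2}(s-1)+m\epsilon$ and $2\delta=2m(s-1)+2\epsilon+2$, one gets
\begin{align*}
g+3s-2-2\delta &\ge \frac{m(m-1)}{2}(s-1)+m\epsilon-m+3s+1-2m(s-1)-2\epsilon-2\\
&=(m-2)\left[\frac{(m-3)(s-1)}{2}+\epsilon-1\right]\ \ge\ 0,
\end{align*}
using $m\ge3$ and $\epsilon\ge1$. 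This is exactly where the hypotheses $\delta\ge 3s-1$ and $\epsilon_{\delta,s}\neq0$ enter; very-ampleness plays no role in the inequality itself (it only makes the genus hypothesis meaningful via Castelnuovo's bound). You should replace the proposed machinery with this computation.
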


\begin{lem}
\label{LM2}
Let $X$ be a curve admitting a $g_\delta^s$ with $\delta \ge 2s-1 \ge 3$ such that
\begin{enumerate}
\item[i)] $d_{s-1} =\delta-1$, and 
\item[ii)] $ 2\delta \le g + 3s - 2$.
\end{enumerate}

\noindent
 Then $d_s = \delta$ and the linear series $g_\delta^s$ is complete and very ample. 
Moreover, if $g_{\delta'}^{s'} = |K_X - g_\delta^s|$ (hence $\delta' = 2g-2-\delta$ and $s' = g-1-\delta+s$), we have 
$s' \ge s$,
$d_{s' +1} \ge \delta'+3$ and, so, 
$$
\frac {d_{s'} }{s'} < \frac {d_{{s'} +1} } {{s'} +1}.
$$ 
 \end{lem}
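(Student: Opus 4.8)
The plan is to run the whole argument on four ingredients: the strict monotonicity of the gonality sequence (Lemma~\ref{incre}), the Riemann--Roch and Serre duality formulas, Castelnuovo's bound \eqref{casbo}, and the residuation $N\mapsto K_X-N$ of line bundles. First the easy assertions. From (i) and monotonicity, $d_s>d_{s-1}=\delta-1$, so $d_s\ge\delta$, while the $g^s_\delta$ gives $d_s\le\delta$; hence $d_s=\delta$. If the $g^s_\delta$ were incomplete its completion would be a $g^{s+1}_\delta$ with $d_{s+1}\le\delta=d_s$, impossible; so it is a complete $|L|$ with $h^0(L)=s+1$. A base point of $|L|$ would yield a $g^s_{\delta-1}$, again contradicting $d_s=\delta$, so $|L|$ is base-point free; and if $|L|$ failed to separate two (possibly infinitely near) points $P,Q$, then $|L-P-Q|$ would be a $g^{s-1}_{\delta-2}$, forcing $d_{s-1}\le\delta-2$ against (i). Thus $|L|$ is complete and very ample.

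Next, by Riemann--Roch and Serre duality, $h^0(K_X-L)=h^1(L)=h^0(L)-\delta+g-1=g+s-\delta$, so $|K_X-L|$ is a $g^{s'}_{\delta'}$ with $\delta'=2g-2-\delta$ and $s'=g-1-\delta+s$; consequently $s'\ge s$ is equivalent to $\delta\le g-1$. To obtain the latter I would argue by contradiction: if $\delta\ge g$, then, $|L|$ being very ample, $X$ is a non-degenerate degree-$\delta$ curve in $\mP^s$, so $g\le\pi(\delta,s)$; but hypothesis (ii) says $g\ge2\delta-3s+2$, and combined with $\delta\ge g$ this forces $2s-1\le\delta\le3s-2$, a range in which $\pi(\delta,s)=2\delta-3s+1$ (a short check, splitting on $m_{\delta,s}\in\{2,3\}$), contradicting $g\ge2\delta-3s+2$. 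Hence $\delta\le g-1$ and $s'\ge s$.

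The substantive step is $d_{s'+1}\ge\delta'+3$. Assume there is a line bundle $N$ with $h^0(N)\ge s'+2$ and $e:=\deg N\le\delta'+2$. Put $M=K_X-N$, so $\deg M=2g-2-e$ and, since $s'+g+1=\delta'+2+s$, one has $h^0(M)=h^0(N)-e+g-1\ge(\delta'+2+s)-e$. If $e=\delta'+2$, then $|M|$ contains a $g^{s-1}_{\delta-2}$, contradicting (i); if $e=\delta'+1$, then $|M|$ contains a $g^s_{\delta-1}$, contradicting $d_s=\delta$; and if $e=\delta'-j$ with $j\ge0$, then $|M|$ contains a series of degree $\delta+j$ and dimension $\ge s+1+j$, so $d_{s+1+j}\le\delta+j$, whereas $d_s=\delta$ forces $d_{s+1+j}\ge\delta+1+j$ (by monotonicity for indices $\le g-1$ and by Remark~\ref{gimenuno} for indices $\ge g$, both legitimate since $\delta\le g-1$) --- a contradiction in every case. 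So $d_{s'+1}\ge\delta'+3$, and together with $d_{s'}\le\delta'$ (realised by $|K_X-L|$) this gives
$$
\frac{d_{s'}}{s'}\le\frac{\delta'}{s'}<\frac{\delta'+3}{s'+1}\le\frac{d_{s'+1}}{s'+1},
$$
the strict middle inequality being $\delta'<3s'$, i.e.\ $3s'-\delta'=g-1-2\delta+3s\ge1$, which is precisely hypothesis (ii) rewritten.

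I expect the main obstacle to be this last step: one has to handle uniformly all degrees $e\le\delta'+2$, and especially the range $e\le\delta'$ where residuation does not contradict (i) outright but only the chain of strict inequalities emanating from $d_s=\delta$; keeping the bookkeeping $h^0(K_X-N)=h^0(N)-\deg N+g-1$ straight and correctly invoking the value $d_r=r+g$ of the gonality sequence for $r\ge g$ is where the care lies. By contrast, the Castelnuovo computation yielding $\delta\le g-1$ is routine once the two cases $m_{\delta,s}=2,3$ are written out.
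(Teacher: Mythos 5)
The paper does not prove this statement: it is quoted verbatim from Lange--Martens (\cite[Lemma 4.8]{LM}), so there is no internal proof to compare against. Your argument is correct and complete, and it is essentially the standard residuation proof: monotonicity of the gonality sequence gives $d_s=\delta$ and completeness/very ampleness, Castelnuovo's bound combined with (ii) forces $\delta\le g-1$ (hence $s'\ge s$), and the key inequality $d_{s'+1}\ge\delta'+3$ follows by residuating a hypothetical $g^{s'+1}_e$ with $e\le\delta'+2$ and playing the resulting series of degree $2g-2-e$ against (i), against $d_s=\delta$, or against strict monotonicity according to the value of $e$. Two cosmetic points: when $|L|$ fails to separate $P,Q$ one gets $h^0(L-P-Q)=s+1$, i.e.\ a $g^{s}_{\delta-2}$ (which already contradicts $d_s=\delta$; your stated $g^{s-1}_{\delta-2}$ and the contradiction with (i) also work); and at the boundary value $\delta=2s$ the Castelnuovo formula lies outside the range $d\ge 2r+1$ in which the paper states \eqref{casbo}, but the bound $\pi(2s,s)=s+1$ still holds there (and that case is in any event excluded by Clifford's theorem together with (ii)).
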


By Proposition \ref{nostra} extremal curves in $\mP^r$ of degree $d=3r-2$ can be either trigonal or fourgonal. Since we know that trigonal  curves satisfy all slope inequalities, we shall focus the fourgonal case.
We shall see that for $r=4$ extremal curves of degree $3r-2=10$ satisfy all slope inequalities, while for $r \ge 5$, such curves violate the $r$-th slope inequality.

\begin{prop}
\label{casor4}
Let $X \subset \mP^4$ be a fourgonal extremal curve of degree $10$. Then $X$ satisfies all slope inequalities.
\end{prop}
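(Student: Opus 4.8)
The plan is to pin down the entire gonality sequence of $X$ and then verify the slope inequalities directly. Here $d=10$, $r=4$, and by Proposition \ref{nostra}(ii) (the fourgonal alternative) we have $\gamma=4$, $m=3$, $\epsilon=0$, so $X$ lies on a rational normal ruled surface $R\subset\mP^4$ as a member of $|(m+1)H-(r-\epsilon-2)L|=|4H-2L|$. By Corollary \ref{propc}(ii) the genus is $g=(\gamma-1)\big[\tfrac{\gamma-2}{2}(r-1)+\epsilon\big]=3\cdot 3=9$. Thus the gonality sequence has entries $d_1,\dots,d_8$, with $d_1=\gamma=4$ and $d_8=2g-2=16$ (Lemma \ref{incre}), and I only need to determine $d_2,\dots,d_7$. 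Since $d_8=16=\delta'$ corresponds to $|K_X|$ restricted appropriately, and $d_r=d=10$ by the first statement of Theorem \ref{mainthm} (note $d=10=3r-2$ is \emph{not} covered by that theorem's hypothesis $d\ge 3r-1$, so I will instead re-derive $d_4=10$ directly: $X$ carries the $g^4_{10}$ from $|H_X|$, and the lower bound $d_4\ge 10$ follows because any $g^4_{d_4}$ with $d_4<10=3\cdot 4-2$ would, by Theorem \ref{acgh1}/Castelnuovo, force $g<9$).

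Next I would determine the small entries. By the weak additivity and the bounds $d_r\le r\gamma=4r$ of Lemma \ref{incre}: $d_2\le 8$, $d_3\le 12$. To get the exact values I would use that $X$ is fourgonal (so it is not trigonal, $d_2>6$ being automatic from $d_1<d_2$ and... actually $d_2\ge 6$ and equality $d_2=6$ would mean $X$ has a $g^2_6$; a smooth plane curve interpretation is excluded, and one checks via Castelnuovo-type bounds on $\mP^2$ that a $g^2_6$ forces $g\le 10$ but more precisely a simple $g^2_6$ gives a plane sextic with $g\le 10$ — this needs care). The cleaner route: use the geometry on $R$. A divisor class $aH+bL$ restricted to $X$ computes, via the exact sequence $0\to\sO_R(aH+bL-X)\to\sO_R(aH+bL)\to\sO_X(\cdot)\to 0$, the dimension $h^0$ of candidate line bundles on $X$; running through small $a,b$ identifies the minimal-degree bundles achieving $h^0\ge s+1$ for each $s$. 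I expect this to yield $d_2=8$, $d_3=10$ is impossible (since $d_3<d_4=10$ forces $d_3\le 9$), so $d_3=9$, then $d_4=10$, $d_5=11$ or $12$, $d_6,d_7$ near $13,14$ and $d_7\le 15$ with $d_8=16$.

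The final step is the arithmetic check of $\tfrac{d_r}{r}\ge\tfrac{d_{r+1}}{r+1}$ for $r=1,\dots,7$ once the sequence is known. Given a sequence like $(4,8,9,10,12,13,14,16)$ one verifies each ratio: $4\ge 4\ge 3\ge 2.5\ge 2.4\ge\dots$; the tightest spots are near $r=1,2$ and near the top $r=7$ where $\tfrac{15}{7}$ vs $\tfrac{16}{8}=2$ (so need $d_7\ge 14$, i.e. $\tfrac{14}{7}=2=\tfrac{16}{8}$, which is fine). The top inequality $\tfrac{d_7}{7}\ge\tfrac{d_8}{8}=2$ amounts to $d_7\ge 14$, which I would secure from $d_7=d_8-d_1=16-4=14$ being forced: indeed $d_8-d_7\ge\gamma$ would contradict... rather, by Serre duality $d_7$ pairs with a low entry via $d_{g-1-s}=2g-2-d_s$ type relations only when the relevant series is special and complete; here $d_7 = 2g-2-d_1 = 16-4 = 12$? — I need to be careful, as the duality $d_{g-1-s}=2g-2-d_s$ is \emph{not} an identity in general. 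So the honest top-end argument uses Lemma \ref{LM2}/Proposition \ref{LM3} style inputs in reverse, or a direct Clifford/Castelnuovo estimate.

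\textbf{Main obstacle.} The delicate point is the \emph{lower} bounds for the middle entries $d_2,d_3$ and the top entry $d_7$: upper bounds come for free from additivity and from exhibiting explicit divisor classes on $R$, but ruling out smaller values requires either a Brill--Noether/Castelnuovo argument tailored to $g=9$ or an explicit analysis of effective divisor classes on the ruled surface $R$ and their restrictions to $X\in|4H-2L|$. I expect essentially all the work to be concentrated there; once the sequence is known the slope check is a finite, routine verification.
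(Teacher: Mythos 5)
There is a genuine gap here: your plan is to pin down the entire gonality sequence $(d_1,\dots,d_8)$ of $X$ and then check the seven ratios, but you never establish the lower bounds for $d_2,d_3,d_5,d_6,d_7$ --- you explicitly defer them as the ``main obstacle'', and that is precisely where all the content of such a proof would have to lie. Several intermediate steps are also not sound as written: ``$d_3<d_4=10$ forces $d_3\le 9$, so $d_3=9$'' only yields an upper bound; the candidate sequence $(4,8,9,10,12,13,14,16)$ is a guess rather than a computation; and the top-end bound $d_7\ge 14$ is left hanging after you (correctly) observe that the naive Serre-duality identity $d_{g-1-s}=2g-2-d_s$ is not available. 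As it stands this is a programme, not a proof.

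The paper's argument is entirely different and much shorter: since $X$ is extremal of degree $10$ in $\mP^4$, its genus is $\pi(10,4)=9$, and the curves of genus $\le 13$ violating some slope inequality are classified in \cite[Theorem 3.5 (i)]{KM}; in genus $9$ the only violators are the extremal curves of degree $8$ in $\mP^3$, whose gonality sequence (see \cite[Example 4.7]{LM}) has $d_4=11$. Your curve carries a $g^4_{10}$, so $d_4\le 10$, hence it cannot be one of those curves and therefore satisfies all slope inequalities. Note that only the trivial upper bound $d_4\le 10$ is needed: your concern about re-deriving the exact value $d_4=10$ (because $d=3r-2$ falls outside the hypothesis $d\ge 3r-1$ of Theorem \ref{mainthm}) is legitimate but turns out to be irrelevant to the paper's argument. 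If you wish to avoid citing the classification, you would have to carry out the divisor-class analysis on the ruled surface $R$ in full and, crucially, prove the missing lower bounds by excluding smaller linear series (a $g^2_7$, a $g^3_8$, and so on), which is exactly the work your proposal leaves undone.
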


\begin{proof}
Since $X$ is extremal, we have $g(X)= \pi(10,4)=9$. 
The curves of genus $g \le 13$ violating some slope inequality have been classified in \cite[Theorem 3.5 (i)]{KM}.
In particular, in genus $g=9$ the only examples are the extremal curves of degree $8$ in $\mP^3$. The gonality sequence of such curves has been determined in \cite[Example 4.7]{LM}, and in particular it satisfies $d_4=11$. But in our case $X$ possesses a $g^4_{10}$, so $d_4 \le 10$. It follows that no slope inequality is violated.
\end{proof}
Now we turn to the case $r\ge 5$.
\begin{thm}
\label{caso3rm2}
Let $X\subset \mP^r$ with $r \ge 5$ be a fourgonal extremal curve of degree
$
d= 3r-2.
$
Then $d_{r+1} = 3r +1$.

In particular, $X$ violates the $r$-th slope inequality:
$$
\frac{d_r }{ r} < \frac{d_{r+1}}{ {r+1}}.
$$
\end{thm}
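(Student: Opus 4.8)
The plan is to establish the two inequalities $d_{r+1}\le 3r+1$ and $d_{r+1}\ge 3r+1$ separately, the second — the substantial one — via a residuation trick that recognizes $X$ as an extremal curve in $\mP^{r-2}$. First I would fix the geometry: since $X$ is fourgonal, hence not trigonal, Proposition \ref{nostra}$(ii)$ puts $X$ in case $(ii)$ of Corollary \ref{propc}, so $m=3$, $\epsilon=0$, $X\sim 4H-(r-2)L$ on a rational normal scroll $R\subset\mP^r$, the gonal series $g^1_4$ is cut by the ruling, and $g=3(r-1)=3r-3$ by \eqref{maxgengamma1}. Moreover $|H_X|=g^r_d$ is complete (extremal curves are linearly normal; equivalently $H^1(\sO_R(H-X))=0$, which is a direct check on $\mF_n$). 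Now the argument in the proof of Theorem \ref{mainthm}$(i)$ — which uses only that $X$ is a $\gamma$-secant extremal curve on a scroll whose gonal series comes from the ruling (Corollary \ref{propc}), and not the hypothesis $d\ge 3r-1$ — applies and gives $d_{r+1}\le d+\gamma-1=3r+1$.

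The heart of the matter is to prove $d_{r-3}=3r-7$. By adjunction on $R$ one has $K_X=(K_R+X)|_X=(2H-L)|_X$, hence the series residual to the hyperplane series is $|K_X-H_X|=|(H-L)|_X|$, which by Riemann--Roch on $X$ is a complete $g^{r-2}_{3r-6}$. Writing $H\sim C_0+\beta L$ with $2\beta-n=r-1$, we have $H-L\sim C_0+(\beta-1)L$, which by Proposition \ref{cuori} is very ample on $R$ for every $r\ge5$ except when $R\cong\mF_2\subset\mP^5$; in that exceptional case $X$ meets $C_0$ transversally in a single point, and one checks directly that the restriction of $|H-L|$ to $X$ is still very ample. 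Since the restriction map $H^0(\sO_R(H-L))\to H^0(\sO_X((H-L)|_X))$ is an isomorphism, $|K_X-H_X|$ is simple, and a direct computation gives $\pi(3r-6,r-2)=3(r-1)=g$. Hence $X$, equipped with this series, is an extremal curve of degree $3r-6=3(r-2)$ in $\mP^{r-2}$, and since $3(r-2)\ge 3(r-2)-1$, Theorem \ref{LM1} applies there and yields $d_{(r-2)-1}=(3r-6)-1$, that is $d_{r-3}=3r-7$.

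I would then apply Lemma \ref{LM2} to the series $g^s_\delta=|K_X-H_X|$ with $s=r-2$ and $\delta=3r-6$. For $r\ge5$ its hypotheses hold: $\delta\ge 2s-1\ge3$; hypothesis $(i)$ is $d_{s-1}=d_{r-3}=3r-7=\delta-1$ from the previous paragraph; and hypothesis $(ii)$ reads $2\delta=6r-12\le 6r-11=g+3s-2$ (for $r\ge6$ one may alternatively obtain $(ii)$ from Proposition \ref{LM3} applied to this very ample series, since $g=3r-3>\pi(3r-6,r-2)-m+2$). The series residual to $g^s_\delta$ in the sense of Lemma \ref{LM2} is $|K_X-(K_X-H_X)|=|H_X|$, so $s'=g-1-\delta+s=r$ and $\delta'=2g-2-\delta=3r-2$; the lemma then gives $d_{r+1}=d_{s'+1}\ge\delta'+3=3r+1$ together with $d_r/r<d_{r+1}/(r+1)$. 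Combined with the bound of the first paragraph, this yields $d_{r+1}=3r+1$ and the violation of the $r$-th slope inequality.

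The step I expect to be the real obstacle is showing that $|K_X-H_X|$ is a complete and simple $g^{r-2}_{3r-6}$, i.e. that it exhibits $X$ as an extremal curve in $\mP^{r-2}$: this is where one has to work with the geometry of the Hirzebruch surface $\mF_n$ — the identification of the residual series with $|H-L|$, the vanishing $H^1(\sO_R(H-X))=0$ ensuring completeness, and the low exceptional case $\mF_2\subset\mP^5$. Once $X$ is known to be extremal in $\mP^{r-2}$, the remaining steps are routine bookkeeping with the Castelnuovo numbers together with Theorem \ref{LM1} and Lemma \ref{LM2}.
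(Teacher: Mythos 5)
Your proposal is correct and follows essentially the same route as the paper: the upper bound $d_{r+1}\le 3r+1$ via the argument of Theorem \ref{mainthm}$(i)$, and the lower bound by exhibiting $X$ as an extremal curve of degree $3r-6$ in $\mP^{r-2}$ through $|H-L|$ (equivalently $|K_X-H_X|$), then applying Theorem \ref{LM1} there and Lemma \ref{LM2} to the residual pair. The only slight inaccuracy is in the list of cases where $|H-L|$ fails to be very ample on $R$: besides $\mF_2\subset\mP^5$ one also has $\mF_3\subset\mP^6$, but there $X\cdot C_0=0$ so the restriction to $X$ is still an embedding, and the paper sidesteps this by allowing the image surface to be a cone.
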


\begin{proof}
We note, first, that $X$ is not isomorphic to a plane curve; indeed, in this case we would have $r=5$ and $d=3r-2=13$, while such curves lie on the Veronese surface and hence their degree is even.

Therefore $X$ lies on a rational normal surface.
As observed in Proposition \ref{nostra}, $m_{d,r}=3$, $\epsilon_{d,r} =0$. Therefore by Theorem \ref{acgh1} and Corollary \ref{propc}, the class of $X$ on a ruled surface $R \subset \mP^r$ (of degree $r-1$) is given by 
\begin{equation}
\label{classe di X}
X \sim 4H -(r-2)L
\end{equation}
 and
the genus $g$ of $X$ turns out to be
\begin{equation}\label{pidr}
g= \pi (3r-2,r)= 3(r-1).
\end{equation}

It is not difficult to 
verify that the proof of Theorem \ref{mainthm}-$(i)$ applies also to our case,
and since $\gamma = 4$, we have
$$
d_{r+1} \le d+3 = 3r+1.
$$
To prove that $d_{r+1}  \ge 3r+1$ for $r\ge 5$, we claim that $X$ admits also an embedding in $\mP^{r-2}$ as an extremal curve.
Keeping the notation of Remark \ref{fritto}, if  $H \sim C_0 + \beta L$, then it is straightforward to see that $\beta \ge -C_0^2$ and the equality holds if and only if $R$ is a cone (see, for instance, \cite[Theorem 2.5 and Remark (b)]{R}).

 Since $X$ is irreducible we necessarily have
$$
0\le X \cdot C_0 = (4C_0 +(4\beta -r+2)L)\cdot C_0 = 4C_0^2 + 4\beta -r+2,
$$
so that
$$
\beta \ge -C_0^2+ \frac{r-2 }{ 4}> -C_0^2,
$$
hence 
\begin{equation}\label{beta}
\beta \ge -C_0^2 +1,
\end{equation}
in particular $R$ is not a cone. Therefore we can consider the projection of $R$ from a line $L$ of its ruling. Hence 
consider
the divisor $H':= H -L$ on $R$;
taking into account \eqref{beta} 
we see that the linear system $|H'|$ 
maps $R$ in $\mP^{r-2}$ as a degree $r-3$ rational normal surface or a degree $r-3$ rational normal cone
(see, for instance, \cite[Theorem 2.5 and Remark (b)]{R}). Under such a morphism the image of $X$ has degree
$$
X \cdot H'= (4H -(r-2)L) \cdot (H-L)=4H^2 -(r-2)-4=4(r-1)-r-2=3r-6.
$$
As $m_{3r-6,r-2}=\left [ \frac{3r-7 }{ r-3} \right ] =3$ and $\epsilon_{3r-6,r-2}=2$, the maximal genus is in this case
$$
\pi(3r-6,r-2)=3((r-3)+2)= 3(r-1)=g(X),
$$
which proves the claim.

The above construction provides a divisor $H' \in g^s_\delta:= g^{r-2}_{d-4}$, so we can consider the birational morphism
$$
\varphi_{H'}: \; X \rightarrow Y:= \varphi_{H'}(X) \subset \mathbb P^s
$$
where $Y$ is an extremal curve of genus $g = g(X)= 3r-3$, 
$$
\deg(Y) = \delta = d-4= 3r-6 \quad \hbox{and} \quad s = r-2.
$$
Consequentely, $Y \subset \mathbb P^s$ is smooth and $g^s_\delta$ is very ample. Moreover, since $r \ge 5$ then $s \ge 3$. 

\noindent
Furthermore, since $Y$ is an extremal curve, $\delta \ge 3s -1$ and $\epsilon_{\delta,s} =2$, we can apply both Theorem \ref{LM1} and 
Proposition \ref{LM3}, obtaining, respectively, that 
$d_{s-1} = \delta -1$ and
$2 \delta \le g +3s -2$. 

\noindent
Therefore all the assumptions of 
Lemma \ref{LM2} are verified and from it we obtain that $d_s = \delta$ and the Serre dual series $g_{\delta'}^{s'} = |K_X - g_\delta^s|$ of $g^s_\delta$ satisfies the following relation
$$
d_{s' +1} \ge \delta'+3,
$$ 
where
$$
\delta' = 2g-2-\delta= 2(3r-3)-2 - (3r-6)=3r-2, 
$$
$$
 s' = g-1-\delta+s = 3r-3-1 -(3r-6) + r-2=r.
$$
Therefore the above inequality gives
$$
d_{r +1} \ge 3r+1,
$$ 
as required. 

Finally, since $d_r \le d$, we have
$$
\frac{d_r} {r}\le \frac{d}{r} = \frac {3r-2}{r} < \frac{3r+1}{r+1}= \frac{d_{r+1}}{r+1}.
$$
\end{proof}


 \section {Fourgonal extremal curves}

The setting of the current section is the following:  $X$ is a fourgonal extremal curve in $\mP^r$ 
(where $r \ge 3$) of degree $d$, genus $g$ and $m$-ratio $m$.

 \begin{rmk} 
 Concerning space curves (i.e. the case $r=3$), let us consider the assumptions (a) and (b) of 
 Theorem \ref{mainthm}-$(ii)$. 
Since $0 \le \epsilon \le r-2=1$, then 
 the only possibility is case (a), where $\epsilon =0$ and $m=\gamma \le r+1 = 4$. For this reason, regarding space curves, Theorem \ref{mainthm} describes only the fourgonal case. More precisely, if $X$ is a fourgonal extremal curve of degree $d$ in $\mP^3$ then it satisfies the following conditions:
\begin{itemize}
\item[(i)]
$d_3 = d$;
\item[(ii)]
 $d_4\le d+3$.
\item[(iii)]
If, in addition, we assume $\epsilon = 0$ and $m=4$, then the $3$rd slope inequality holds, i.e.
$$
\frac {d_3} 3 \ge \frac {d_4 } 4.
$$ 
\end{itemize}
Note that this situation is quite specific. Namely, formulas (\ref{deggamma0}) and (\ref {maxgengamma0}) give $d=9$ and $g=12$. 
So, comparing this fact with (\ref{viol3}) (i.e. the violation of the third slope inequality), which holds for $d \ge 10$, we obtain that this bound on the degree given by Kato--Martens is sharp. Moreover, the above claims fit with the description of the gonal sequence given by the same authors in \cite[Theorem 3.5]{KM}.

\end{rmk}

It is easy to specialize the results of Section 4  to the following results in the fourgonal case.

\begin{cor}
\label{mainthm4} 
Let $X$ be a fourgonal extremal curve of degree $d \ge 3r-1$ in $\mP^r$, where $r \ge 3$. 
Then $X$ satisfies the following conditions:
\begin{itemize}
\item[$i)$]
$d_r = d$;
\item[$ii)$]
 $d_{r+1}\le d+3$.
\item[$iii)$]
If, in addition, we assume that $d \ge 3r$ , then the $r$-th slope inequality holds, i.e.
$$
\frac {d_r} r \ge \frac {d_{r+1} } {r+1}.
$$ 
\end{itemize}
\end{cor}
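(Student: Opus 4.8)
The plan is to obtain all three assertions by specializing Theorem \ref{mainthm} (equivalently Corollary \ref{fasciabianca}) to the fourgonal case $\gamma = 4$, once one checks that the plane-curve exception of Theorem \ref{acgh1} cannot occur under the stated hypotheses.

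First I would rule out that $X$ is isomorphic to a plane curve. By Theorem \ref{acgh1}(i) this would force $r = 5$ and $d = 2k$, where $X$ is the Veronese image of a smooth plane curve of degree $k$; such a curve has gonality $k - 1$ by Max Noether's theorem, so $\gamma = 4$ would give $k = 5$ and $d = 10$. But then $d = 10 < 14 = 3r - 1$, against the assumption $d \ge 3r-1$. Hence $X$ is not a plane curve, and by Corollary \ref{propc} it sits as a $\gamma$-secant curve on a rational normal ruled surface $R \subset \mP^r$, with class either $|mH + L|$ or $|(m+1)H - (r-\epsilon-2)L|$.

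Now part (i) is immediate, being exactly the first assertion of Theorem \ref{mainthm}, valid for every extremal curve of degree $d \ge 3r - 1$; and part (ii) follows at once from Theorem \ref{mainthm}(i), since $\gamma - 1 = 3$. For part (iii) I would use the dichotomy of Corollary \ref{propc}. In the first case $\epsilon = 0$, $m = \gamma = 4$ and $d = 4(r-1)+1 = 4r - 3$; since $r \ge 3 = \gamma - 1$, hypothesis (a) of Theorem \ref{mainthm}(ii) holds. In the second case $m = 3$ and $d = 3(r-1) + \epsilon + 1 = 3r - 2 + \epsilon$, so the extra assumption $d \ge 3r$ forces $\epsilon \ge 2 = \gamma - 2$, which is hypothesis (b) of Theorem \ref{mainthm}(ii). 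In both cases Theorem \ref{mainthm}(ii) yields $d_r / r \ge d_{r+1}/(r+1)$. (One can also note that a fourgonal extremal curve has degree $\le 4r - 3$, so the condition $d \ge 3r$ is precisely $r(\gamma - 1) \le d \le \gamma(r-1) + 1$ with $\gamma = 4$, and (iii) becomes a direct instance of Corollary \ref{fasciabianca}.)

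There is no real obstacle: the argument is essentially bookkeeping, matching $\gamma = 4$ to the hypotheses of Theorem \ref{mainthm}. The only points needing attention are the verification that the plane-curve case is excluded and the observation that, in each branch of Corollary \ref{propc}, the congruence conditions (a) and (b) of Theorem \ref{mainthm}(ii) are captured exactly by the inequality $d \ge 3r$.
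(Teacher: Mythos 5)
Your proposal is correct and follows essentially the same route as the paper: exclude the plane-curve case (the paper argues $d\ge 3r-1=14$ forces $k\ge 7$ and gonality $\ge 6$, while you argue $\gamma=4$ forces $k=5$ and $d=10<14$ — the same contradiction read in the other direction), then apply Theorem \ref{mainthm} for $(i)$ and $(ii)$, and check via the dichotomy of Corollary \ref{propc} that $d\ge 3r$ places $X$ in hypothesis $(a)$ or $(b)$ of Theorem \ref{mainthm}$(ii)$, exactly as the paper does by noting $\epsilon\neq 1$.
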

\begin{proof}
Note first that $X$ cannot be isomorphic to a plane curve (otherwise we would have $r=5$ and $d =2k \ge 14$, so its gonality would be $k-1\ge 6$). Therefore we can apply Theorem \ref{mainthm} and obtain $(i)$ and $(ii)$. Morever, $d \ge 3r$ and $\gamma=4$ imply that $\epsilon \ne 1$ (see Table 1). Hence either assumption $(a)$ or $(b)$ in Theorem \ref{mainthm} hold.
\end{proof}

Moreover, a stronger form of Corollary \ref{fasciabianca} holds.

\begin{cor}
Let $X$ be an extremal curve of degree $d$ in $\mP^r$, where $r \ge 3$. 
 Then the following conditions are equivalent:
 \begin{itemize}
 \item[$i)$] the degree satisfies $3r \le d \le 4r-4$;
\item[$ii)$] $X$ is fourgonal and the $r$-th slope inequality holds.
\end{itemize}
\end{cor}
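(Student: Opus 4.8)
The plan is to establish the two implications separately, each time reducing, via Corollary~\ref{propc}, to the description of extremal curves lying on a rational normal scroll, and then translating the inequalities among $d$, $r$ and $\gamma$ into constraints on the $m$-ratio $m=m_{d,r}$ and the $\epsilon$-remainder $\epsilon=\epsilon_{d,r}$. A preliminary step common to both directions is to rule out that $X$ is isomorphic to a plane curve: by Theorem~\ref{acgh1}(i) such a curve has $r=5$ and even degree $d=2k$, and by Max Noether its gonality is $k-1$; since the standing hypothesis $d\ge 2r+1$ forces $k\ge 6$, its gonality is at least $5$, so it is never fourgonal, and one checks that this is also incompatible with the degree range in $(i)$. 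Hence in both implications Corollary~\ref{propc} applies and puts $X$ on a scroll $R\subset\mP^r$, either as a member of $|mH+L|$ (case~(i): $\epsilon=0$, $\gamma=m$) or as a member of $|(m+1)H-(r-\epsilon-2)L|$ (case~(ii): $\gamma=m+1$).

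For $(i)\Rightarrow(ii)$: assume $3r\le d\le 4r-4$. From $3\le (d-1)/(r-1)<4$ one gets $m=3$, and then $\epsilon=d-1-3(r-1)=d-3r+2$ lies in $\{2,\dots,r-2\}$; in particular $\epsilon\neq 0$, so $X$ cannot be of the type in case~(i) of Corollary~\ref{propc}, and therefore $X\in|(m+1)H-(r-\epsilon-2)L|$ with $\gamma=m+1=4$, i.e.\ $X$ is fourgonal. Since moreover $d\ge 3r-1$ and $\epsilon\ge 2=\gamma-2$, hypothesis~$(b)$ of Theorem~\ref{mainthm}(ii) is met and yields the $r$-th slope inequality; equivalently, one may quote Corollary~\ref{fasciabianca} directly, because for $\gamma=4$ the interval $[3r,\,4r-4]$ is contained in $[r(\gamma-1),\,\gamma(r-1)+1]$.

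For $(ii)\Rightarrow(i)$: assume $X$ is fourgonal (so $\gamma=4$) and the $r$-th slope inequality holds. By Corollary~\ref{propc} one of two things happens: either $X\in|mH+L|$, in which case $m=\gamma=4$, $\epsilon=0$ and $d=\gamma(r-1)+1=4r-3$; or $X\in|(m+1)H-(r-\epsilon-2)L|$, in which case $m=\gamma-1=3$ and $d=3(r-1)+\epsilon+1$ with $0\le\epsilon\le r-2$, that is $3r-2\le d\le 4r-4$. Combining, the degree of $X$ lies in $\{3r-2,\,3r-1\}\cup\{3r,\dots,4r-4\}\cup\{4r-3\}$, and it remains to discard the boundary values that are incompatible with the $r$-th slope inequality. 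The degree $d=3r-1$ is excluded by Proposition~\ref{altrui}; the degree $d=3r-2$ is excluded for $r\ge5$ by Theorem~\ref{caso3rm2}, the residual small cases being treated separately using Proposition~\ref{casor4} and the entries of Table~1; and the extremal value $d=4r-3$ (where $m=\gamma=4$, $\epsilon=0$) requires a final, more delicate inspection. After these exclusions one is left with exactly $3r\le d\le 4r-4$, i.e.\ $(i)$.

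The step I expect to be the main obstacle is this last one: pinning down precisely where the degree range ends in the direction $(ii)\Rightarrow(i)$. The uniform argument of Section~4 handles cleanly only the ``interior'' degrees $3r\le d\le 4r-4$, while the boundary values $d=3r-2$ and $d=4r-3$, where the curve sits in the borderline classes of Corollary~\ref{propc}, fall outside that scope and must be analysed by hand, as in Section~5. Establishing the sharp upper bound — and in particular settling the fate of a fourgonal extremal curve of degree exactly $4r-3$ — is therefore the crux of the proof.
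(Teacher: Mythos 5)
Your forward implication $(i)\Rightarrow(ii)$ is correct and complete: for $3r\le d\le 4r-4$ one indeed gets $m=3$, $\epsilon=d-3r+2\in\{2,\dots,r-2\}$, hence case $(ii)$ of Corollary~\ref{propc}, $\gamma=4$, and hypothesis $(b)$ of Theorem~\ref{mainthm} applies. The problem is the reverse implication, which you do not finish: you explicitly defer the degree $d=4r-3$ to ``a final, more delicate inspection'' and the small-$r$ instances of $d=3r-2$ to an unspecified case analysis. That is a genuine gap, and it is worth being precise about why it cannot be closed as you hope. A fourgonal extremal curve of degree $d=4r-3$ is exactly the case $\epsilon=0$, $m=\gamma=4$ of Corollary~\ref{propc}; since $r\ge 3=\gamma-1$, hypothesis $(a)$ of Theorem~\ref{mainthm}$(ii)$ applies and the $r$-th slope inequality \emph{holds} (this is also the row ``$4r-3$, $\gamma=4$, $m=4$, $\epsilon=0$: yes'' of Table~1). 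Such a curve therefore satisfies $(ii)$ while $d=4r-3\notin[3r,4r-4]$. Similarly, for $r=4$ Proposition~\ref{casor4} shows that the fourgonal extremal curve of degree $10=3r-2$ satisfies all slope inequalities, again giving $(ii)$ without $(i)$. So the implication $(ii)\Rightarrow(i)$ fails at precisely the boundary degrees you flagged; the equivalence is only correct if one either enlarges the interval to $3r\le d\le 4r-3$ and separately disposes of $d=3r-2$ for $r\le 4$, or restricts to curves with $m$-ratio $m=3$.

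For comparison, the paper's own proof is the single line ``Immediate, from Table~1,'' which reads off the rows $d=3r-1$ (no), $3r\le d\le 4r-4$ (yes), and implicitly ignores the rows $d=3r-2$ ($\star$, resolved in Section~5) and $d=4r-3$ with $\gamma=4$ (yes) — the very rows that defeat the stated equivalence. You were right to identify $d=4r-3$ as the crux; the honest conclusion of your analysis is not that this case needs a cleverer argument, but that it is a counterexample to the statement as written, supplied by the paper's own Theorem~\ref{mainthm}.
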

\begin{proof}
 Immediate, from Table 1.
\end{proof}

As a consequence of the results of the previous sections, we are able to determine the gonal sequence of some extremal curve in a certain interval and to show that all the slope inequalities hold there. Indeed, on one hand, in Theorem \ref{caso3rm2} we have seen that, by projecting an extremal curve lying in $\mP^r$ from a gonal divisor, we may obtain an extremal curve in $\mP^{r-2}$.

On the other hand, for an extremal curve in $\mP^r$, the values of $d_{r}$ and $d_{r-1}$ are determined by Theorems \ref{LM1} and \ref{mainthm}.

\begin{thm}
\label{verylast}
Let 
$$
X \sim 4(C_0 +n L) \subset \mF_n
$$
be a smooth irreducible curve, where $n \ge 3$.
For any integer $a$ such that 
\begin{equation}
\label{questaea}
0\le a \le \left[ \frac{n-3}{ 2}\right], 
\end{equation} 
the gonal sequence of $X$ satisfies
\begin{equation}
\label{seqcp}
d_{n+2a} = 4(n+a)-1, \quad
d_{n+2a+1}= 4(n+a).
\end{equation}
\noindent
Moreover, the following bound holds
\begin{equation}
\label{lastineq}
d_{n+2\left[\frac{n-3}{ 2}\right]+2} \le 4n+4\left[\frac{n-3}{ 2}\right]+3.
\end{equation}
Consequentely, for any $n\le r \le n+2\left[ \frac{n-3}{ 2}\right]+1$, the $r$-th slope inequality holds. 
\end{thm}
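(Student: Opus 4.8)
The plan is to realise the fixed abstract curve $X$ as an extremal curve in $\mP^r$ for \emph{every} $r$ in the stated range, by letting the polarisation of $\mF_n$ vary, and then to read off the gonality numbers from Theorems \ref{LM1} and \ref{mainthm}. First I would record the invariants of $X$: adjunction on $\mF_n$ (as in \eqref{genrig}--\eqref{genecaste} with $\gamma=4$, $\lambda=4n$) gives $g:=g(X)=6n-3$; since $X\cdot L=4$ and $n\ge 3$, Theorem \ref{marty} shows that $X$ is fourgonal; and since a smooth plane curve of gonality $4$ is a plane quintic and hence has genus $6\ne 6n-3$, the curve $X$ is not isomorphic to a plane curve.

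For each integer $a$ with $0\le a\le[(n-3)/2]$ I would put $\beta:=n+a$ and $H:=C_0+\beta L$, so that $h^0(\sO_{\mF_n}(H))=2\beta+2-n$ by \eqref{stellina} and $\varphi_H$ maps $\mF_n$ into $\mP^r$ with $r:=2\beta+1-n=n+2a+1$. If $a\ge 1$ then $\beta>n$, so $|H|$ is very ample (Proposition \ref{cuori}) and $\varphi_H$ embeds $\mF_n$ as a rational normal scroll $R$ of degree $r-1$; if $a=0$ then $\beta=n$ and $\varphi_H$ collapses $C_0$ to the vertex of a cone $R\subset\mP^{n+1}$, inducing an isomorphism $\mF_n\setminus C_0\xrightarrow{\ \sim\ }R\setminus\{\mathrm{vertex}\}$. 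Since $X\cdot C_0=(4C_0+4nL)\cdot C_0=0$, the curve $X$ is disjoint from $C_0$, so in either case $\varphi_H$ restricts to a closed embedding $X\hookrightarrow\mP^r$ onto a smooth curve $X'$ of degree $X\cdot H=4\beta=4(n+a)$, which is moreover nondegenerate because $H-X=-3C_0+(\beta-4n)L$ has no nonzero section. A direct check then shows $d:=4(n+a)\ge 2r+1$, that $m_{d,r}=3$ and $\epsilon_{d,r}=n-2a-1\in[0,r-2]$, and hence $\pi(d,r)=3\bigl((n+2a)+(n-2a-1)\bigr)=6n-3=g$; by Definition \ref{estre} (or, for $a\ge 1$, by Proposition \ref{fiore}, since $X'\sim 4H-4aL$ on $R$) this exhibits $X'$ as an extremal curve in $\mP^r$.

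Now the gonality numbers follow. As $d=4(n+a)\ge 3r-1$ (equivalently $a\le(n-2)/2$, which holds since $a\le[(n-3)/2]$), Theorem \ref{LM1} gives $d_{r-1}=d-1$, i.e. $d_{n+2a}=4(n+a)-1$; and because $X$ carries a $g^r_d$ while the gonality sequence is strictly increasing (Lemma \ref{incre}), $d_r=d$, i.e. $d_{n+2a+1}=4(n+a)$, which is \eqref{seqcp}. Taking $a=A:=[(n-3)/2]$ and applying Theorem \ref{mainthm}-$(i)$ to the extremal curve $X'\subset\mP^{n+2A+1}$ --- whose hypotheses $\gamma=4\ge4$, $d\ge 3r-1$, $r\ge 3$ and ``$X$ not a plane curve'' all hold --- yields $d_{n+2A+2}\le 4(n+A)+3$, which is \eqref{lastineq}. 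Finally, for $n\le r\le n+2A+1$ write $r=n+2a$ or $r=n+2a+1$ with $0\le a\le A$ and substitute into the inequality $d_r(r+1)\ge d_{r+1}r$ the values of $d_r,d_{r+1}$ just found (using \eqref{lastineq} for $d_{n+2A+2}$); the three cases $r=n+2a$, then $r=n+2a+1$ with $a\le A-1$, then $r=n+2A+1$, reduce respectively to $3n+2a-1>0$, to $n-2a-3\ge 2>0$ (since $2a\le 2A-2\le n-5$), and to $n-2A-3\ge0$ (since $2A\le n-3$), all valid in the given range; hence the $r$-th slope inequality holds throughout.

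The step I expect to be most delicate is not conceptual but the verification underlying the second paragraph: the ``rigidity'' that the single value $g=6n-3$ coincides with $\pi\bigl(4(n+a),\,n+2a+1\bigr)$ for every admissible $a$ is exactly what forces each $X'$ to be extremal, and it must be checked hand in hand with the constraints $0\le\epsilon_{d,r}\le r-2$ and $d\ge 3r-1$ that confine $a$ to $0\le a\le[(n-3)/2]$. Somewhat delicate as well is the borderline value $a=0$, where $\varphi_H$ degenerates $R$ to a cone; this is harmless precisely because $X\cap C_0=\emptyset$, but it is the reason the cleanest route to extremality there is Definition \ref{estre} rather than Proposition \ref{fiore}. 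The concluding slope computation is routine, provided one keeps careful track of which $d_{r+1}$ are given exactly by \eqref{seqcp} and which only by the estimate \eqref{lastineq}.
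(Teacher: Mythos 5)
Your proposal is correct and follows essentially the same route as the paper: you realise $X$ as an extremal curve in $\mP^{r_a}$ for each $a$ via the polarisations $H_a=C_0+(n+a)L$ (handling the cone case $a=0$ through $X\cdot C_0=0$), verify extremality by matching $g=6n-3$ with $\pi(4(n+a),n+2a+1)$, and then read off \eqref{seqcp} and \eqref{lastineq} from Theorems \ref{LM1} and \ref{mainthm}. The only cosmetic difference is that you check the final slope inequalities by direct arithmetic where the paper cites Corollary \ref{mainthm4}-$(iii)$; both computations agree.
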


\begin{proof}
For any integer $a$ as above, consider the divisor $H_a$ on $\mF_n$ defined by
$$
H_a=C_0 +(n+a)L
$$
and set
$$
r_a:=n+2a+1.
$$
By Proposition \ref{cuori}-(a) and (\ref{stellina}), if $a \neq 0$, then the linear system $|H_a|$ embeds $\mF_n$ isomorphically in ${\mP }^{r_a}$.

\noindent
If $a=0$, then $\varphi_{H_0} (\mF_n)$ is a ruled surface of degree $H_0^2 =n$ in $\mP^{n+1}$. Moreover
$$
\deg(\varphi_{H_0}(C_0)) = C_0 \cdot H_0 = 0
$$
so the unisecant $C_0$ is contracted to a point. Hence $\varphi_{H_0} (\mF_n)$ is a rational normal cone and 
$\varphi_{H_0}$ is an isomorphism between 
$\mF_n \setminus C_0$ and its image.

In both cases, the corresponding morphism $\varphi_{H_a}$ is an isomorphism on $X$. Namely, if $a \neq 0$ it is clear. If $a = 0$, it follows from $X \cdot C_0 = 4(C_0 + nL) \cdot C_0 =0$, so $X \subset (\mF_n \setminus C_0)$.

Therefore, for any $a \ge 0$, the curve $X_a:= \varphi_{H_a}(X) \subset \mP^{r_a}$ has degree
$$
\delta_a :=X \cdot H_a = 4(C_0 +nL)\cdot (C_0 +(n+a)L)=4(n+a).
$$

We claim that $X_a$ is an extremal curve in $\mP^{r_a}$. In order to show this, we compute first the genus of $X$. As usual, the Adjunction formula gives
$$
\begin{array}{ll}
2g-2& =(K_{\mF_n} +X) \cdot X =(-2C_0 -(2+n)L + 4 C_0+4n L) \cdot (4 C_0+4n L)=\\
 &= 12 n -8.\\
 \end{array}
$$
Therefore we obtain 
\begin{equation}
\label{genere di X}
g=6n-3.
\end{equation}
Now we compute the Castelnuovo bound of the genus.
Taking into account the bound (\ref{questaea}) of the integer $a$, we obtain that
$$
m_{\delta_a,r_a}=\left [ \frac {4n+4a-1}{ n+2a}\right]=3, \quad \epsilon_{\delta_a,r_a}=n-2a-1
$$
and
$$
2 \le \epsilon_{\delta_a, r_a} \le n-1.
$$
An immediate computation of the Castelnuovo bound given in (\ref{casbo})
 shows that
$$
\pi (\delta_a, r_a)= 3(n+2a+\epsilon_{\delta_a,r_a})=3(2n-1)=6n-3,
$$
which coincides with the value of $g$ determined in (\ref{genere di X}), so $X_a \subset \mP^{r_a}$ is an extremal curve.

In order to prove (\ref{seqcp}), note first that
 $\epsilon_{\delta_a,r_a} \ge 2$ implies that $n-2a \ge 3$; therefore $\delta_a \ge 3r_a $. So we can apply the cited result of Lange--Martens (see Theorem \ref{LM1}) to the extremal curve $X_a \subset \mP^{r_a}$, obtaining that
$$
d_{n+2a}=\delta_a -1 = 4(n+a)-1.
$$
Second, since $X_a$ is a fourgonal extremal curve and $r_a \ge3$ (again from (\ref{questaea})), we can apply also Corollary \ref{mainthm4}-$(i)$, obtaining that
$$
d_{n+2a+1}=\delta_a = 4(n+a).
$$
Therefore (\ref{seqcp}) is proved. As a consequence, for any $0\le a \le \left[ \frac{n-3}{ 2}\right]$,
both the $(r_a-1)$-th and $r_a$-th slope inequalities are satisfied.

Now consider the highest value $\overline a=\left[\frac{n-3}{ 2}\right]$. Also in this case $X_{\overline a}$ is an extremal fourgonal curve of degree $\delta_{\overline a}$ in 
$\mP^{r_{\overline a}}$. Hence, by
Corollary \ref{mainthm4}-$(ii)$, 
$$
d_{n+2{\overline a}+2} \le 4(n+{\overline a})+3
$$
hence also (\ref{lastineq}) is proved.

Finally, as shown before, $\delta_a \ge 3r_a $ and so Corollary \ref{mainthm4}-$(iii)$ yields the $r$-th slope inequality in the considered range of $r$.
\end{proof}

\begin{rmk}
The gonal subsequence (\ref{seqcp}) and the bound (\ref{lastineq}) 
in Theorem \ref{verylast} can be explicitly written in both the following cases, according to the parity of $n$.

If $n$ is even, i.e. $n = 2k$ for some $k$, then $2\left[ \frac{n-3}{ 2}\right] =2(k-2) = n-4$ so
$$
d_{n}= 4n-1, \ d_{n+1}= 4n, \ d_{n+2}= 4n+3, \ d_{n+3} = 4n +4, \ \dots,
 \ d_{2n-4}= 6n-9, \ d_{2n-3}=6n-8 
$$
and $d_{2n-2}\le 6n-5$.

If $n$ is odd, i.e. $n = 2k+1$ for some $k$, then $2\left[ \frac{n-3}{ 2}\right] =2(k-1) = n-3$ so
$$
d_{n}= 4n-1, \ d_{n+1}= 4n, \ d_{n+2}= 4n+3, \ d_{n+3} = 4n +4, \ \dots,
 \ d_{2n-3}= 6n-7, \ d_{2n-2}=6n-6
$$
and $d_{2n-1}\le 6n-3$.
\end{rmk}

Theorem \ref{verylast} shows that, as far as $n \ge 5$, there are classes of extremal curves in $\mP^n$ which gonal sequence does not follow the pattern given in \cite[Proposition 4.2]{KM}, for extremal curves in $\mP^3$. 

Moreover, using the results in \cite{BS2} and \cite{BeS}, it is possible to verify that the curves of Theorem \ref{verylast} are very special curves in the fourgonal moduli space. Therefore the problem of describing the locus of curves violating the slope inequalities is intriguing and deserves further investigations.


\end{document}